\newcommand{\beq}{\begin{equation}}
\newcommand{\eeq}{\end{equation}}
\newcommand{\beqa}{\begin{eqnarray}}
\newcommand{\eeqa}{\end{eqnarray}}
\newcommand{\nn}{\nonumber}
\newcommand{\bx}{\mathbf{x}}
\newcommand{\by}{\mathbf{y}}
\newcommand{\bz}{\mathbf{z}}
\newcommand{\mss}{\kern 1pt}
\newcommand{\NN}{{\mathbb N}}
\newtheorem{definition}{Definition}
\newtheorem{proposition}{Proposition}
\newtheorem{theorem}{Theorem}
\newtheorem{corollary}{Corollary}
\newtheorem{lemma}{Lemma}
\theoremstyle{definition}
\newtheorem{remark}{\textbf{Remark}}
\newtheorem{problem}{\textbf{Problem}}
\newtheorem{example}{\textbf{Example}}
\begin{document}

\author{Jose Carrasco}
\address{Instituto de Ciencias Matem\'aticas, C/ Nicol\'as Cabrera, No 13--15, 28049 Madrid, Spain\\ and Departamento de F\'{\i}sica Te\'{o}rica, Facultad de Ciencias F\'{\i}sicas, Universidad Complutense de Madrid, 28040 -- Madrid, Spain}
\email{joseacar@ucm.es}

\author{Piergiulio Tempesta}
\address{Instituto de Ciencias Matem\'aticas, C/ Nicol\'as Cabrera, No 13--15, 28049 Madrid, Spain\\ and Departamento de F\'{\i}sica Te\'{o}rica, Facultad de Ciencias F\'{\i}sicas, Universidad
Complutense de Madrid, 28040 -- Madrid, Spain }
\email{piergiulio.tempesta@icmat.es, ptempest@ucm.es}

\subjclass[2010]{MSC: 14L05, 13J05} \title{Formal rings}
\date{February 10, 2019}

\begin{abstract}
  A notion of one-dimensional formal ring is presented. It consists of
  a triple $(A,\Phi,\Psi)$ where $A$ is a unital ring and
  $\Phi$ and $\Psi$ are two formal
  power series in $2$ variables ${\Phi(x,y),\Psi(x,y)\in A\llbracket x,y\rrbracket}$, the first one defining a
  one-dimensional formal group law over $A$ and the second one
  providing a second composition law satisfying axiomatic properties
  of compatibility with the first one.

  For a characteristic-zero ring $A$, a large class of one-dimensional
  formal rings can be obtained by constructing a new composition law,
  defined in terms of the group logarithm associated with a given
  formal group law $\Phi(x,y)$, and associative and distributive with
  respect to it.

  A natural $n$-dimensional generalization of the previous
  construction is also proposed; curves on $n$-dimensional formal
  rings are introduced.  The higher-dimensional theory allows us to define a generalization of the ring $W(A)$ of  Witt vectors over a ring $A$,
  which is recovered by means of a specific choice of the associated group logarithm. The composition laws of our generalized Witt ring are defined in terms of an underlying formal ring structure.

  Examples of formal rings related to
  Hirzebruch's theory of genera are explicitly computed. Finally, we also
  propose the examples of Euler's and Abel's formal rings.
\end{abstract}

\maketitle

\tableofcontents

\section{Introduction}\label{sec.int}
Since the seminal paper by Bochner~\cite{Bochner}, formal group theory
has been largely investigated due to its prominent role in several
branches of mathematics as algebraic topology~\cite{BN}-\cite{Lazard},
cobordism theory~\cite{Nov},~\cite{Quillen}, field theory~\cite{LT},
analytic number theory~\cite{Honda}, \cite{Serre1966},
\cite{PT2010}-\cite{PT2015}, etc. (see \cite{Haze} and
\cite{Serre1992} for a thorough exposition). Recently, some
applications in statistics and information geometry have also been
found~\cite{P2016}.

The aim of this article is to present the construction of certain
algebraic structures related to formal group laws, that we shall call
\textit{formal rings}. In order to illustrate this notion, we consider
first the one-dimensional case.  Given a unital commutative ring $A$,
a one-dimensional formal ring is a triple $(A,\Phi,\Psi)$ where
$\Phi(x,y)\in A\llbracket x,y\rrbracket$ is a one-dimensional formal
group law in two variables over $A$ and
$\Psi(x,y)\in A\llbracket x,y\rrbracket $ is another formal power
series in the same variables, representing a second composition
law. This law is required to be associative, commutative and
distributive with respect to the first one.

In the case of a characteristic-zero ring $A$, a straightforward
theorem of existence of formal ring structures holds: given a
one-dimensional commutative formal group law $\Phi(x,y)$ over $A$,
there exists a second composition law expressed in terms of another
two-variable formal power series
$\Psi(x,y)\in A\llbracket x,y\rrbracket$, associative and distributive
with respect to the first one, whose coefficients can be explicitly
computed in terms of the group logarithm associated with the first
law.

By analogy with the theory of formal groups, a notion of homomorphism
between formal rings is introduced. This allows us to define, for any
ring $A$ the category $\mathcal{C}(A)$ of formal rings over $A$. We
shall observe that a ring homomorphism $r:A_1\to A_2$ naturally
induces a functor from $\mathcal{C}(A_1)$ to $\mathcal{C}(A_2)$. We
shall also propose the construction of a {``canonical"} isomorphism
between two given formal rings over a characteristic zero ring $A$,
which is inspired by the theory of $A$-typical formal $A$-modules.

The construction of one-dimensional formal rings proposed is also
discussed in relation with the theory of Hirzebruch's multiplicative
genera. The formal rings related with the $c$-genus, the Todd genus
and the $L$-genus are explicitly determined.

Besides, we propose the notion of $n$-dimensional formal ring as a
natural generalization of the previous construction. This algebraic
structure can be realized in several ways. In particular, given an
$n$-dimensional formal group law and the group logarithm associated
with it, we prove that there exists a second family of formal power
series in $2n$ variables, defined by means of the same group
logarithm, which is associative and distributive with respect to the
given formal group. In other words, over a unital commutative ring $A$ of characteristic
zero, with any formal group law one can associate a formal ring
structure, which is defined a priori in the same ring where the group
logarithm is defined.

We also propose a concept of curves defined on formal rings. The
composition laws of each formal ring endow the set of its curves with
a standard ring structure. Also, a filtration of subrings can be
defined, which makes it possible to introduce a topology in the set of
curves.

An important result of the theory of formal rings is the possibility of defining a generalization of the standard ring of Witt vectors over a unital ring $A$. Indeed,the ring $W(A)$ and its truncated version $W_n(A)$ can be interpreted as a particular instance of a more general formal ring structure, that we shall call a \textit{generalized Witt ring} $GW(A)$. This structure is defined by assigning a suitable ``triangular'' form to the components of the underlying $n$-dimensional group logarithm, and can be explicitly computed by means of an iterative process.
\par
The standard definition of the ring $W_n(A)$ is  recovered when the components of group logarithm are identified with the usual \textit{ghosts} associated with the Witt vectors. The same result holds for the case of the universal Witt vectors.


Many questions deserve to be further investigated. A fundamental,
general one is to establish the notion of formal ring over a field of
characteristic $p>0$.  A straightforward question is to ascertain
whether there exists a construction of formal rings over a ring $A$ of
integers of a discretely valued field $K$, analogous to the elegant
constructions of formal group laws proposed by Honda \cite{Honda1970}
and Lubin-Tate \cite{LT}.  Possible applications of formal rings in
statistical mechanics, along the lines of \cite{P2016} are certainly
of interest. An infinite-dimensional generalization of the notion of
formal ring would also be of potential relevance in the applications.

The article is organized as follows. In Section~\ref{sec.1dim}, we will recall some
basic facts about formal group theory and introduce the notion of
formal ring in the one-dimensional case. In Section~\ref{sec.frings}, we shall prove
our main existence theorem of the formal ring structure. In Section~\ref{sec.hom},
we introduce the notion of homomorphism of formal rings and the
category of formal rings.  In Section~\ref{sec.ndim}, we generalize our
construction to the $n$-dimensional case; curves over $n$-dimensional
formal rings are also defined.  In Section ~\ref{sec.Witt}, we prove that the ring of Witt vectors can be obtained from a more general ring structure,
whose construction is explicitly proposed. The examples of the Euler and the Abel
formal rings, as well as those related with Hirzebruch's theory of
genera, are discussed explicitly in Section~\ref{sec.hir}. A two dimensional
bi-parametric formal ring generalizing the multiplicative group law is
explicitly computed in the final Section~\ref{sec.2dim}.

\section{One-dimensional formal group laws and rings}\label{sec.1dim}
In order to fix the notation, we shall first recall some basic facts
about formal group theory.

\subsection{Formal group laws} \cite{Bochner}, \cite{Haze},
\cite{Serre1992}.  All rings considered will be associative
commutative unital rings.  In particular, given a ring $(A,+,\cdot)$,
we shall denote by $0$ the neutral element of the addition operation
$+:A\times A\to A$ and by $1$ the neutral element of the
multiplication operation $\cdot:A\times A\to A$.

A commutative one--dimensional formal group law over a ring $A$ is a
formal power series~$\Phi(x,y)\in A\llbracket x,y\rrbracket$ such that
\begin{enumerate}[label=\roman*)]
\item $\Phi(x,0)=\Phi(0,x)=x\,,$
\item $\Phi\left(\Phi(x,y),z\right)=\Phi\left(x,\Phi(y,z)\right)\,.$
\end{enumerate}
When $\Phi(x,y)=\Phi(y,x)$, the formal group law is said to be
commutative.

Let $B=\mathbb{Z}[b_{1},b_{2},\ldots]$ be the ring of integral
polynomials in infinitely many variables.  We shall consider the
formal series $F_{U}(s)=\sum_{i=0}^\infty b_i\frac{s^{i+1}}{i+1}$ with
$b_0=1$ (the \textit{group exponential}). Let
\begin{equation*}
G_{U}(t)=\sum_{k=0}^\infty a_k\frac{t^{k+1}}{k+1}
\end{equation*}
be its compositional inverse, i.e., $F_{U}\left(G_{U}(t)\right)=t$
(the associated \textit{group logarithm}). From this property we
deduce $a_0=1, a_1=-b_1, a_2=\frac32 b_1^2-b_2,\ldots$ for the
coefficients of $G_{U}(t)$.  The Lazard universal formal group law~\cite{Haze} is defined by the formal power series
\begin{equation*}
  \Phi_U(s_1,s_2)=G_{U}^{-1}\left(G_{U}(s_1)+G_{U}(s_2)\right)\,.
\end{equation*}
The coefficients of the power series
$G_{U}^{-1}\left(G_{U}(s_1)+G_{U}(s_2)\right)$ lie in the ring
$B\otimes\mathbb Q$ and generate over $\mathbb Z$ a subring
$L\subset B\otimes\mathbb Q$, called the Lazard ring. For any
commutative one-dimensional formal group law over any ring $A$, there
exists a unique homomorphism $L\to A$ under which the Lazard group law
is mapped into the given group law (\textit{universal property} of the
Lazard group).

Let $A$ be a ring of characteristic zero. Then, for any commutative
one-dimensional formal group law $\Phi(x,y)$ over $A$, there always
exists a group logarithm, namely a series
$f(x)\in A\llbracket x\rrbracket\otimes\mathbb{Q}$ such that
\begin{equation*}
  f(x)= x+O(x^2)\quad\text{and}\quad\Phi(x,y)=f^{-1}\left(f(x)+f(y)\right)\in A\llbracket x,y\rrbracket\otimes\mathbb Q\,.
\end{equation*}
We shall present here our main definition, which naturally generalizes
the notion of formal group law.

\subsection{Formal rings}
We propose here our main definition, that we shall first formulate in the one-dimensional case.
\begin{definition}\label{def1}
  Let $\left(A,+,\cdot\right)$ be a unital ring. A formal ring is a
  triple ${\mathcal{R}:=(A,\Phi,\Psi)}$ where
  $\Phi\left(x,y\right),\Psi(x,y)\in A\llbracket x,y\rrbracket$ are
  formal power series such that
  \begin{enumerate}[label=\roman*)]
  \item $\Phi(x,y)$ is a commutative formal group law
  \item $\Psi(x,y)$ satisfies the following relations:
    \begin{align*}
      \Psi(\Psi(x,y),z)&=\Psi(x,\Psi(y,z))\,,\\
      \Psi(x,\Phi(y,z))&=\Phi(\Psi(x,y),\Psi(x,z))\,,\\
      \Psi(\Phi(x,y),z)&=\Phi(\Psi(x,z),\Psi(y,z))\,.
    \end{align*}
  \end{enumerate}
  In particular, whenever $\Psi(x,y)=\Psi(y,x)$, the formal ring will
  be said to be commutative.
\end{definition}
Hereafter, we shall say that $A$ is the \textit{base ring}
of the formal ring $\mathcal{R}=(A,\Phi,\Psi)$.

\section{Formal rings from the Lazard universal formal group}\label{sec.frings}
There is a general construction of formal rings, obtained by extending
the Lazard universal formal group by means of a binary operation
defined in terms of the group logarithm and exponential.

\subsection{Existence of formal rings}
\begin{theorem}\label{teo}
  Let $B=\mathbb{Z}[a_{1},a_{2},\ldots]$. The group logarithm
  given by the formal power series $G_{U}(x)=\sum_{k=0}^\infty a_k\frac{x^{k+1}}{k+1}$ induces two
  binary operations
  \begin{align}
    \Phi_U(x,y)&:=G_{U}^{-1}\left(G_{U}(x)+G_{U}(y)\right)\qquad
                 \text{(addition)}\label{Phi}\\
    \Psi_U(x,y)&:=G_{U}^{-1}\left(G_{U}(x)\cdot G_{U}(y)\right)\qquad
                 \text{(multiplication)}\label{Psi}
  \end{align}
  which define the structure of a formal ring $\mathcal{R}=(A,\Phi_U,\Psi_U)$ over the base ring $A=B\otimes\mathbb{Q}\,$.
\end{theorem}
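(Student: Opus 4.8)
The plan is to verify that the two operations $\Phi_U$ and $\Psi_U$ defined via the group logarithm $G_U$ satisfy all the axioms of Definition~\ref{def1}. The key conceptual point is that $G_U$ conjugates both operations into the standard addition and multiplication on the ring $B\otimes\mathbb{Q}$: writing $u=G_U(x)$, $v=G_U(y)$, $w=G_U(z)$, one has $G_U(\Phi_U(x,y))=u+v$ and $G_U(\Psi_U(x,y))=u\cdot v$. Thus each axiom for $(\Phi_U,\Psi_U)$ becomes, after applying $G_U$ to both sides and using the invertibility of $G_U$, the corresponding identity for $(+,\cdot)$ in $B\otimes\mathbb{Q}$, which holds because $B\otimes\mathbb{Q}$ is a commutative ring.

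Concretely, I would proceed as follows. First I would record that $G_U(x)=x+O(x^2)$ has compositional inverse $F_U=G_U^{-1}$ over $B\otimes\mathbb{Q}$, so that $G_U$ and $F_U$ are mutually inverse formal power series and the substitution rule $G_U(G_U^{-1}(S))=S$ is valid whenever $S$ has zero constant term. Next I would verify that $\Phi_U$ is a commutative one-dimensional formal group law: the identity axiom follows from $G_U(0)=0$, associativity from the associativity of $+$, and commutativity from the commutativity of $+$; this establishes axiom i). Then I would treat axiom ii) by the same conjugation argument: applying $G_U$ to the associativity relation for $\Psi_U$ reduces it to $(uv)w=u(vw)$, and applying $G_U$ to the two distributivity relations reduces them to $u(v+w)=uv+uw$ and $(u+v)w=uw+vw$ respectively. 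Commutativity of $\Psi_U$ likewise follows from $uv=vu$.

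The one genuine subtlety, and the step I expect to require the most care, is the well-definedness of the substitutions. Because $\Psi_U(x,y)=G_U^{-1}(G_U(x)\cdot G_U(y))$ involves the \emph{product} $G_U(x)\cdot G_U(y)$, whose lowest-order term is $xy$ (of total degree $2$, with vanishing constant term), I must check that this product is a power series with zero constant term so that $G_U^{-1}$ may be composed with it and yields a well-defined element of $A\llbracket x,y\rrbracket$; the same check is needed for the nested substitutions appearing in the associativity and distributivity relations, where one composes $G_U^{-1}$ with expressions built from several copies of $G_U(x),G_U(y),G_U(z)$. Since $G_U$ has zero constant term and leading coefficient $1$, every such internal argument has zero constant term, so each composition is a legitimate formal power series operation and the formal manipulations are justified. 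Finally I would remark that all the resulting series a priori lie in $A\llbracket x,y\rrbracket = (B\otimes\mathbb{Q})\llbracket x,y\rrbracket$, which is exactly the base ring asserted in the statement, so no integrality claim beyond $B\otimes\mathbb{Q}$ is needed and the proof is complete.
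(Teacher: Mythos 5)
Your proof is correct and follows essentially the same route as the paper's: both rest on the observation that $G_U$ conjugates $\Phi_U$ and $\Psi_U$ into the ordinary addition and multiplication of $B\otimes\mathbb{Q}$, so every axiom of Definition~\ref{def1} reduces to a ring identity after applying $G_U$. In fact your write-up is more complete than the paper's (which asserts well-definedness as obvious and only sketches distributivity, adding the construction of the additive inverse $\chi(x)=G^{-1}(-G(x))$, a point not required by the definition), and your attention to the zero-constant-term condition justifying each substitution is exactly the right care to take.
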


\begin{proof}
  The operations
  $\Phi_U(x,y), \Psi_U(x,y)\in A\llbracket x,y\rrbracket$ are
  obviously well defined in terms of addition, multiplication and
  composition of formal power series.  From
  Eqs.~\eqref{Phi}-\eqref{Psi} we deduce the distributivity property
  \[
    \Psi_U\left(x,\Phi_U(y,z)\right)=\Phi_U\left(\Psi_U(x,y),\Psi_U(x,z)\right).
  \]
  Also, introducing
  \[\chi(x):=G^{-1}\left(-G(x)\right)\in A\llbracket x\rrbracket\,,
  \]
  we have that
  $\Phi_U\left(x,\chi(x)\right)=\Phi_U\left(\chi(x),x\right)=0$, which
  shows that $0$ is the neutral element of the group law $\Phi_U$.
\end{proof}

We remark that the formal ring $(A,\Phi_U,\Psi_U)$ a priori is not
unital. However, if $G^{-1}(1)\in A$, the relations
$\Psi_U(x,G^{-1}(1))=\Psi_U(G^{-1}(1),x)=x$ make
sense, showing that $G^{-1}(1)$ behaves as the ``unit'' of operation $\Psi_U$, the multiplication.

A natural problem is to establish if all one-dimensional formal rings
over a commutative ring $A$ of characteristic zero can be obtained
according to the previous construction. In other words, for a
commutative formal group law $\Phi(x,y)$ with associated group
logarithm $G(x)$, we wish to determine the most general form of the
composition law $\Psi(x,y)$ which fulfills Definition~\ref{def1}. To
this aim, by imposing the associativity and distributivity laws we can
arrive to the following equivalent

\begin{problem}
  \textit{Given a group logarithm $G(x)$, to find the most general
    solution $F(x,y)$ to the functional equations}
  \begin{align*}
    F(x,y)+F(x,z)&=F(x,G^{-1}(G(y)+G(z)),\\
    F(x,G^{-1}(F(y,z)))&=F(G^{-1}(F(x,y)),z).
  \end{align*}
\end{problem}

Notice that any solution $F(x,y)$ of the previous problem induces a
second composition law $\Psi(x,y)$ which is compatible with the given
one, namely the formal group law $\Phi(x,y)=G^{-1}(G(x)+G(y))$, by the relation
$F(x,y)= G(\Psi(x,y))$. So that $(A, \Phi, \Psi)$ is a formal ring. A
partial answer to Problem 1 will be discussed in {Section~\ref{sec.hom}}.

Another important open problem is to ascertain whether, at least for a
specific class of formal rings, one could define a base ring playing a
role analogous to that of the Lazard ring $L$ in formal group theory.
As is well known, by means of the functional-equation lemma~\cite{Haze}, under certain conditions one can generate formal group
laws over a suitably defined ring $A$, even if the corresponding group
logarithm is defined over a larger ring $B \supset A$.  However, at
least in the case of one-dimensional formal rings, we have proved that
the base ring for the formal ring structure obtained in Theorem~\ref{teo} exactly coincides with the ring over which the corresponding
group logarithm is defined.

\subsection{Commutative fields}
Let $\left(K,+,\cdot\right)$ be a field of characteristic zero and let
$\{0,1\}$ denote the neutral elements of the addition and the
multiplication, respectively. Given a bijective function over $K$,
using it we can construct a new commutative field as follows.

\begin{proposition}\label{cor}
  Every bijective function $h: K\to K$ with $h(0)=0$ determines two
  binary operations $\Phi$ and $\Psi$ in $K$, namely the addition~$\Phi(x,y)=h^{-1}\left(h(x)+h(y)\right)$ and the multiplication
  $\Psi(x,y)= h^{-1}\left(h(x)\cdot h(y)\right)$, in such a way that
  $(K,\Phi,\Psi)$ is a commutative field.  Moreover, $0$ is the
  neutral element of addition $\Phi$ and $h^{-1}(1)$ is the neutral
  element of multiplication $\Psi$.
\end{proposition}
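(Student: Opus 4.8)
The plan is to regard $h$ as an isomorphism that transports the field structure of $(K,+,\cdot)$ onto the pair of operations $(\Phi,\Psi)$. Applying $h$ to the two defining relations and using that $h$ is a bijection, one obtains the fundamental intertwining identities
\begin{align*}
h(\Phi(x,y)) &= h(x)+h(y),\\
h(\Psi(x,y)) &= h(x)\cdot h(y),
\end{align*}
valid for all $x,y\in K$. These say precisely that $h\colon(K,\Phi,\Psi)\to(K,+,\cdot)$ is a bijective morphism of the relevant operations; since $(K,+,\cdot)$ is already a field, every field axiom for $(K,\Phi,\Psi)$ will follow by pulling it back along $h$ and $h^{-1}$.

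First I would establish that $(K,\Phi)$ is an abelian group. Commutativity and associativity follow immediately from the first identity together with the commutativity and associativity of $+$ in $K$; for instance $h(\Phi(\Phi(x,y),z))=h(x)+h(y)+h(z)=h(\Phi(x,\Phi(y,z)))$, and injectivity of $h$ gives equality of the arguments. For the neutral element, $\Phi(x,0)=h^{-1}(h(x)+h(0))=h^{-1}(h(x))=x$ since $h(0)=0$; uniqueness follows from injectivity. The additive inverse of $x$ is $h^{-1}(-h(x))$, which exists because $h$ is surjective and satisfies $\Phi(x,h^{-1}(-h(x)))=h^{-1}(h(x)-h(x))=h^{-1}(0)=0$.

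Next I would treat the multiplicative structure via the second identity. Associativity and commutativity of $\Psi$ transfer exactly as above. The candidate multiplicative unit is $h^{-1}(1)$: indeed $\Psi(x,h^{-1}(1))=h^{-1}(h(x)\cdot 1)=x$. The key point to check here is that this unit is genuinely distinct from the additive neutral $0$; this holds because $h$ is injective and $h(0)=0\neq 1$, so $h^{-1}(1)\neq 0$. For an element $x\neq 0$ one has $h(x)\neq h(0)=0$ by injectivity, hence $h(x)$ is invertible in the field $K$, and $x^{-1}:=h^{-1}(h(x)^{-1})$ satisfies $\Psi(x,x^{-1})=h^{-1}(h(x)\cdot h(x)^{-1})=h^{-1}(1)$. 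Finally, distributivity is the computation
\[
\Psi(x,\Phi(y,z))=h^{-1}\!\big(h(x)\,(h(y)+h(z))\big)=h^{-1}\!\big(h(x)h(y)+h(x)h(z)\big)=\Phi(\Psi(x,y),\Psi(x,z)),
\]
using distributivity in $K$.

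In truth there is no serious obstacle: the whole statement is a transport-of-structure argument along the bijection $h$, and the role of the hypotheses is transparent. The only points requiring genuine attention are (i) that surjectivity of $h$ is exactly what guarantees the existence of additive and multiplicative inverses, and (ii) that injectivity together with $h(0)=0$ forces $h^{-1}(1)\neq 0$, so that the resulting structure is a bona fide field rather than the zero ring. These observations identify the precise places where the field (as opposed to merely ring) hypotheses and the bijectivity of $h$ are used.
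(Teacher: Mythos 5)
Your proof is correct and takes essentially the same approach as the paper: both arguments transport the field structure of $(K,+,\cdot)$ along the bijection $h$, and your construction of the multiplicative inverse $h^{-1}\left(h(x)^{-1}\right)$ for $x\neq 0$ is exactly the paper's $\psi(x)=h^{-1}\left(1/h(x)\right)$. The only difference is one of thoroughness: the paper regards all axioms except the existence of multiplicative inverses as immediate and verifies only that step, whereas you spell out the remaining (routine) verifications explicitly.
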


\begin{proof}
  We only need to show that for any $x\in K$ with $x\neq 0$ there
  exists an inverse $\psi(x)\in K$ such that
  $\Psi\left(x,\psi(x)\right)=\Psi\left(\psi(x),x\right)=h^{-1}(1)$. Since
  $h(x)$ is a bijection of $K$, $h(x)\neq 0$ for $x\neq 0$ and
  $1/h(x)$ is well defined.  Thus, we can introduce
  $\psi(x):=h^{-1}(1/h(x))$.  It follows from the definition of $\Psi$
  that
  $\Psi\left(x,\psi(x)\right)=\Psi\left(\psi(x),x\right)=h^{-1}(1)$.
\end{proof}

\section{Homomorphisms and categories of formal rings}\label{sec.hom}
\subsection{Preliminaries}
By analogy with the categorical approach to formal groups~\cite{Haze},
we can introduce a category of formal rings. Preliminarily, we shall
propose a natural definition of \textit{homomorphism of formal rings}.
This definition extends the well-known notion of a homomorphism
$\chi: \Phi_1 \to \Phi_2$ of formal groups over a ring $A$ (see the
discussion in~\cite{Haze}), where
$\chi(x) \in A\llbracket x\rrbracket$ is a formal series without
constant term, satisfying the relation
$\chi\big(\Phi_1(x,y)\big)= \Phi_2 \big(\chi(x), \chi(y)\big)$.

\begin{definition} Given two formal rings
  $\mathcal{R}_1=(A, \Phi_1, \Psi_1)$ and
  $\mathcal{R}_2=(A, \Phi_2, \Psi_2)$ over a ring $A$, a formal
  series 
  $\phi(x)\in A\llbracket x\rrbracket$ with $\phi(0)=0$, such that
  \begin{align*}
    \phi(\Phi_1(x,y))&=\Phi_2(\phi(x),\phi(y)) \\
    \phi(\Psi_1(x,y))&=\Psi_2(\phi(x),\phi(y))
  \end{align*}
  will be said to be a homomorphism of the formal rings
  $\mathcal{R}_1$, $\mathcal{R}_2$.
\end{definition}

Let $\phi(x)= b_1 x+ b_2 x^2+ \ldots$ be a homomorphism of rings. If
$b_1=1$, then the homomorphism $\phi(x)$ is an isomorphism, that will
be called a \textit{strict} isomorphism.  For any couple of
homomorphisms $\phi_1: \mathcal{R}\rightarrow\mathcal{R}'$ and
$\phi_2:\mathcal{R}'\rightarrow\mathcal{R}''$, a composition of
homomorphisms can be clearly defined in terms of the formal series
$\phi_2 (\phi_1 (x))$. We shall denote by
$\mathrm{Hom}_{A}(\mathcal{R},\mathcal{R'})$ the set of all
homomorphisms from $\mathcal{R}$ into $\mathcal{R}'$ over the ring
$A$. Also, we introduce the category $\mathcal{C}_{A}(\mathcal{R})$
defined by all formal rings over $A$ with their associated
homomorphisms.

In the previous construction, all formal rings were defined over the
same base ring $A$. Another class of applications among formal rings
arises when we consider the action of homomorphisms
$r:A_1\rightarrow A_2$ between different base rings. Let $\mathcal{R}_1=(A_1, \Phi, \Psi)$ be a formal ring over $A_1$,
where
\[
\Phi(x,y)= x+y+\sum_{ij}\alpha_{ij}x^iy^j, \qquad
\Psi(x,y)= x\cdot y+\sum_{ij}\beta_{ij}x^iy^j\,,
\]
and let $r:A_1\rightarrow A_2$ be a homomorphism of rings.  It naturally
induces an application
$ r_{*}: A_1\llbracket x,y\rrbracket\rightarrow A_2\llbracket
x,y\rrbracket $ defined as
\begin{equation}\label{rhoinduced}
  r_{*}[\Phi](x,y):= x+y+\sum r(\alpha_{ij}) x^{i} y^{j}, \quad r_{*}[\Psi](x,y):= x\cdot y+\sum r(\beta_{ij}) x^{i} y^{j} \ .
\end{equation}
The following propositions are a direct consequence of the fact that a
ring homomorphism preserves the algebraic relations among the
coefficients of the two compatible formal series defining a formal
ring structure $(A, \Phi, \Psi)$.

\begin{proposition}
  Let $\mathcal{R}_1:=(A_1,\Phi,\Psi)$ be a formal ring over
  $A_1$. Also, let $r: A_1\rightarrow A_2$ be a homomorphism of
  rings. Then $\mathcal{R}_2:=(A_2, r_{*}[\Phi], r_{*}[\Psi])$ is a
  formal ring over the base $A_2$.
\end{proposition}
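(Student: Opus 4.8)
The plan is to exploit the fact that every condition in Definition~\ref{def1} is an equality of formal power series assembled from $\Phi$ and $\Psi$ using only the ring operations and the substitution (composition) of series. If I can show that $r_*$ is a ring homomorphism of the power-series rings that, moreover, is compatible with substitution, then applying $r_*$ to each defining identity of $\mathcal{R}_1$ will produce precisely the corresponding identity for $r_*[\Phi]$ and $r_*[\Psi]$, which is what is required.

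First I would record the elementary structural properties of $r_*$. Extending \eqref{rhoinduced} to any finite number of variables, the map $r_*\colon A_1\llbracket x_1,\ldots,x_m\rrbracket\to A_2\llbracket x_1,\ldots,x_m\rrbracket$ acts coefficientwise through $r$. Since $r$ is a homomorphism of rings, $r_*$ preserves sums and products of power series and sends $0\mapsto 0$, $1\mapsto 1$; thus $r_*$ is itself a ring homomorphism (continuous for the topology defined by the augmentation ideal). The key step, and the heart of the argument, is the compatibility of $r_*$ with substitution: for $F(u_1,\ldots,u_n)\in A_1\llbracket u_1,\ldots,u_n\rrbracket$ and series $G_1,\ldots,G_n\in A_1\llbracket x_1,\ldots,x_m\rrbracket$ each vanishing at the origin, one has
\[
  r_*\!\left[F(G_1,\ldots,G_n)\right]=r_*[F]\!\left(r_*[G_1],\ldots,r_*[G_n]\right).
\]
The hypothesis $G_j(0,\ldots,0)=0$ guarantees that the substitution is well defined and that each coefficient of $F(G_1,\ldots,G_n)$ is a finite $\mathbb{Z}$-polynomial in the coefficients of $F$ and of the $G_j$. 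On such a finite polynomial expression $r$ acts homomorphically, so the identity holds degree by degree; I would not grind through this bookkeeping, as it is a routine verification.

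With the lemma in hand, I would first note that all substitutions occurring in Definition~\ref{def1} are legitimate: $\Phi(x,y)=x+y+\cdots$ and $\Psi(x,y)=x\cdot y+\cdots$ vanish at the origin, and by \eqref{rhoinduced} the same is true of $r_*[\Phi]$ and $r_*[\Psi]$. I would then apply $r_*$ to each axiom in turn. For associativity of the group law, the identity $\Phi(\Phi(x,y),z)=\Phi(x,\Phi(y,z))$ in $A_1\llbracket x,y,z\rrbracket$ maps to $r_*[\Phi](r_*[\Phi](x,y),z)=r_*[\Phi](x,r_*[\Phi](y,z))$, which is exactly the associativity of $r_*[\Phi]$. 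The unit axiom $\Phi(x,0)=\Phi(0,x)=x$ is immediate from the explicit form in \eqref{rhoinduced}, and commutativity $\Phi(x,y)=\Phi(y,x)$ is preserved coefficientwise. The associativity of $\Psi$ and the two distributivity relations linking $\Psi$ to $\Phi$ are handled identically, since each is an equality of composites to which the lemma applies verbatim. Consequently $(A_2,r_*[\Phi],r_*[\Psi])$ satisfies every condition of Definition~\ref{def1} and is a formal ring over $A_2$.

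The main obstacle is entirely concentrated in the substitution-compatibility lemma: one must verify that composition of power series is well defined under the no-constant-term hypothesis and, crucially, that every coefficient of a composite is a \emph{finite} polynomial in the coefficients of the ingredients, so that the homomorphism property of $r$ can be transferred from $A_1$ to the composites. Once this is secured, the remainder of the proof is purely formal and requires no characteristic-zero assumption on $A_1$ or $A_2$.
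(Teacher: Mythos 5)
Your proof is correct and follows essentially the same route as the paper, which justifies the proposition with the single observation that a ring homomorphism preserves the algebraic relations among the coefficients of the formal series defining the formal ring structure; your substitution-compatibility lemma for $r_*$ is precisely the rigorous form of that observation. The only difference is that you supply the details the paper leaves implicit, namely that each coefficient of a composite is a finite $\mathbb{Z}$-polynomial in the coefficients of the ingredients (legitimate because the series have no constant term), so that $r$ transfers every defining identity from $A_1$ to $A_2$.
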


\begin{proposition}
  Let $\phi(x)=\sum\beta_ix^i$ be a homomorphism
  $\phi:\mathcal{R}\rightarrow\mathcal{R}'$ of formal rings over
  $A_1$. Then, given a homomorphism $r: A_1\rightarrow A_2$, the
  series $r_{*}[\phi](x)=\sum_ir(\beta_i)x^i$ defines a homomorphism
  $r_{*}[\phi]$ of formal rings over $A_2$.
\end{proposition}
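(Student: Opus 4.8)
The plan is to verify the two defining identities of a formal-ring homomorphism for the series $r_{*}[\phi]$ over $A_2$, by applying the coefficient-wise map $r_{*}$ to the identities already satisfied by $\phi$ over $A_1$. First, since $\phi(0)=0$ forces $\beta_0=0$, we have $r(\beta_0)=r(0)=0$, so $r_{*}[\phi]$ has no constant term, as required. By the preceding proposition, $\mathcal{R}_2=(A_2, r_{*}[\Phi], r_{*}[\Psi])$ and $\mathcal{R}_2'=(A_2, r_{*}[\Phi'], r_{*}[\Psi'])$ are formal rings over $A_2$; it then remains to establish
\begin{align*}
  r_{*}[\phi]\big(r_{*}[\Phi](x,y)\big)&=r_{*}[\Phi']\big(r_{*}[\phi](x),r_{*}[\phi](y)\big)\,,\\
  r_{*}[\phi]\big(r_{*}[\Psi](x,y)\big)&=r_{*}[\Psi']\big(r_{*}[\phi](x),r_{*}[\phi](y)\big)\,.
\end{align*}

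The key step is to record that $r_{*}$ is compatible with substitution of formal power series: for series $F,G$ in the appropriate number of variables over $A_1$ one has $r_{*}[F\circ G]=r_{*}[F]\circ r_{*}[G]$. This holds because every coefficient of a composite series is a universal polynomial with integer coefficients in the coefficients of $F$ and $G$, and only finitely many of the latter contribute to each fixed total degree. Since $r$ is a unital ring homomorphism, hence a $\ZZ$-algebra map fixing $0$ and $1$ and commuting with sums and products, it carries each such integer-polynomial expression over $A_1$ to the corresponding one over $A_2$. The same reasoning shows that $r_{*}$ commutes with addition and multiplication of power series, which is exactly what underlies the statement of the previous proposition.

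Granting this, the conclusion is immediate. The hypothesis that $\phi$ is a homomorphism $\mathcal{R}\to\mathcal{R}'$ means that $\phi(\Phi(x,y))=\Phi'(\phi(x),\phi(y))$ and $\phi(\Psi(x,y))=\Psi'(\phi(x),\phi(y))$ hold in $A_1\llbracket x,y\rrbracket$. Applying $r_{*}$ to both sides of each identity, and using compatibility with substitution on the left (the composite $\phi\circ\Phi$) and on the right (the substitution of the pair $(\phi(x),\phi(y))$ into $\Phi'$, respectively $\Psi'$), we obtain precisely the two displayed identities over $A_2$. Therefore $r_{*}[\phi]$ is a homomorphism $\mathcal{R}_2\to\mathcal{R}_2'$.

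The only genuine content lies in the substitution lemma of the second paragraph; once this functoriality of $r_{*}$ is in hand the proposition follows formally, and indeed the same principle is what renders both the homomorphism relations and the formal-ring axioms themselves stable under $r_{*}$. The sole point requiring care is to confirm that each coefficient in a substitution depends on only finitely many input coefficients, so that applying $r$ term by term is legitimate and commutes with the integer-polynomial operations that produce the composite.
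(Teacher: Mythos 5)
Your proof is correct and follows essentially the same route as the paper, which offers no detailed argument but simply remarks that both propositions are ``a direct consequence of the fact that a ring homomorphism preserves the algebraic relations among the coefficients'' of the series involved. Your substitution lemma for $r_{*}$ (each coefficient of a composite is a universal integer polynomial in finitely many coefficients of the inputs, hence respected by $r$) is precisely the careful justification of that one-line claim.
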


We can conclude that any homomorphism $r: A_1\to A_2$ naturally
induces a functor from $\mathcal{C}_{A_1}(\mathcal{R}_1)$ into
$\mathcal{C}_{A_2}(\mathcal{R}_2)$.

\subsection{A canonical isomorphism of formal rings}
We shall prove the existence of a standard isomorphism between two
given one-dimensional formal rings. Precisely, let
$\mathcal{R}_1=(A, \Phi_1, \Psi_1)$ and
$\mathcal{R}_2=(A, \Phi_2, \Psi_2)$ be two formal rings over a ring
$A$ of characteristic zero. Let $G_1(t)$ and $G_2(t)$ be the group
logarithms associated with $\Phi_1(x,y)$ and $\Phi_2(x,y)$,
respectively. Inspired by the theory of $A$-typical formal
$A$-modules, we introduce the application
\[
\sigma_a(x):=G_2^{-1}(a~G_{1}(x)).
\]
One can easily prove the following result.

\begin{lemma} \label{lemma4} The following relations hold:
  \begin{align*}
    \sigma_{a}(\Phi_1(x,y)) &=  \Phi_2(\sigma_{a}(x), \sigma_{a}(y))\,,\\
    \sigma_{ab}(\Psi_1(x,y))&= \Psi_2(\sigma_{a}(x), \sigma_{b}(y))\,.
  \end{align*}
\end{lemma}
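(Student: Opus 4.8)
The plan is to verify both identities by substituting the definitions of $\Phi_i$ and $\Psi_i$ in terms of their group logarithms and collapsing the compositions algebraically. The key structural facts I would use are that $\Phi_i(x,y)=G_i^{-1}(G_i(x)+G_i(y))$ and $\Psi_i(x,y)=G_i^{-1}(G_i(x)\cdot G_i(y))$, so that applying $G_i$ to a formal-group expression turns addition into ordinary addition and multiplication into ordinary multiplication of the logarithms. The whole calculation therefore amounts to tracking how the scalar $a$ (and the pair $a,b$) interacts with these linearized operations.

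For the first relation, I would start from the left-hand side and compute
\[
\sigma_a(\Phi_1(x,y))=G_2^{-1}\bigl(a\,G_1(\Phi_1(x,y))\bigr)
=G_2^{-1}\bigl(a\,(G_1(x)+G_1(y))\bigr),
\]
using $G_1(\Phi_1(x,y))=G_1(x)+G_1(y)$. Distributing the scalar gives $G_2^{-1}\bigl(a\,G_1(x)+a\,G_1(y)\bigr)$, and recognizing $a\,G_1(x)=G_2(\sigma_a(x))$ (and similarly for $y$) lets me rewrite this as $G_2^{-1}\bigl(G_2(\sigma_a(x))+G_2(\sigma_a(y))\bigr)=\Phi_2(\sigma_a(x),\sigma_a(y))$, which is exactly the right-hand side.

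The second relation proceeds in the same spirit but exploits that multiplication of logarithms is what produces the product $ab$. I would compute
\[
\sigma_{ab}(\Psi_1(x,y))=G_2^{-1}\bigl(ab\,G_1(\Psi_1(x,y))\bigr)
=G_2^{-1}\bigl(ab\,G_1(x)\cdot G_1(y)\bigr),
\]
and then factor the scalar as $ab\,G_1(x)G_1(y)=(a\,G_1(x))\cdot(b\,G_1(y))=G_2(\sigma_a(x))\cdot G_2(\sigma_b(y))$, so that the expression equals $G_2^{-1}\bigl(G_2(\sigma_a(x))\cdot G_2(\sigma_b(y))\bigr)=\Psi_2(\sigma_a(x),\sigma_b(y))$. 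The only point requiring care is that all these manipulations live in $A\llbracket x,y\rrbracket\otimes\mathbb{Q}$, where the group logarithms and their inverses exist by the characteristic-zero hypothesis, and that the cancellations $G_2(G_2^{-1}(\cdot))=\mathrm{id}$ are legitimate at the level of formal power series without constant term. I do not expect a genuine obstacle here; the ``work'' is entirely the bookkeeping of where the scalars $a$ and $b$ land, and the reason the first identity uses a single $a$ while the second uses $ab$ is precisely that additive linearization preserves a single scalar whereas multiplicative linearization multiplies the two scalars together.
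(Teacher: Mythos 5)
Your proof is correct and is essentially identical to the paper's: both arguments apply the group logarithm $G_2$ to linearize $\Phi_1,\Psi_1$ via $G_1\circ\Phi_1 = G_1(x)+G_1(y)$ and $G_1\circ\Psi_1 = G_1(x)\cdot G_1(y)$, track where the scalars $a$ and $b$ land using $a\,G_1(x)=G_2(\sigma_a(x))$, and conclude by invertibility of $G_2$. The only cosmetic difference is that the paper composes with $G_2$ on the outside and cancels it at the end, while you carry $G_2^{-1}$ through the computation from the start.
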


\begin{proof}
  Consider the following chain of equalities in order to prove
  the first relation:
  \[
    \begin{aligned}
      (G_2\circ\sigma_{a}\circ\Phi_1)(x,y)&=a\,(G_1\circ\Phi_1)(x,y)=a\,G_1(x)+a\,G_1(y)\\
      &=(G_2\circ\sigma_a)(x)+(G_2\circ\sigma_a)(y)\\
      &=(G_2\circ\Phi_2)\left(\sigma_a(x),\sigma_a(y)\right).
    \end{aligned}
  \]
  Also, for the second one:
  \[
    \begin{aligned}
      (G_2\circ\sigma_{ab}\circ\Psi_1)(x,y))&=ab\,(G_1\circ\Psi_1)(x,y)=a\,G_1(x)\,b\,G_1(y)\\
      &=(G_2\circ\sigma_a)(x) \cdot\,(G_2 \circ
      \sigma_b)(y)\\
      &=(G_2\circ\Psi_2)\left(\sigma_a(x),\sigma_b(y)\right).
    \end{aligned}
  \]
  Since $G_2$ is invertible, the result follows.
\end{proof}

Let $\sigma_{1}(x)\equiv \sigma(x)$. Thus, as an immediate consequence
of Lemma \ref{lemma4}, we can define a strict isomorphism.

\begin{corollary}
  let $\mathcal{R}_1=(A, \Phi_1, \Psi_1)$ and
  $\mathcal{R}_2=(A, \Phi_2, \Psi_2)$ two formal rings over $A$. Let
  $G_1(t)$ and $G_2(t)$ be the group logarithms associated with
  $\Phi_1(x,y)$ and $\Phi_2(x,y)$, respectively.  Then the function
  $\sigma(x)=G_2^{-1}(G_{1}(x))$ is a isomorphism between
  $\mathcal{R}_1$ and $\mathcal{R}_2$.
\end{corollary}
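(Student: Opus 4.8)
The plan is to read the isomorphism property directly off Lemma~\ref{lemma4}, which has already carried out all the computation; the only remaining points are to specialize the two relations to $a=b=1$ and to confirm that the resulting series $\sigma$ is genuinely invertible as a formal power series.

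First I would recall that $\sigma(x)=\sigma_1(x)=G_2^{-1}(G_1(x))$. Specializing the first relation of Lemma~\ref{lemma4} to $a=1$ gives $\sigma(\Phi_1(x,y))=\Phi_2(\sigma(x),\sigma(y))$, which is precisely the first condition for a homomorphism of formal rings. Then, taking $a=b=1$ in the second relation and observing that $\sigma_{ab}=\sigma_1=\sigma$ and $\sigma_a=\sigma_b=\sigma$, I obtain $\sigma(\Psi_1(x,y))=\Psi_2(\sigma(x),\sigma(y))$, the second homomorphism condition. Hence $\sigma$ respects both composition laws and is a homomorphism $\mathcal{R}_1\to\mathcal{R}_2$.

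Next I would check that $\sigma$ is in fact a strict isomorphism. Since each group logarithm is normalized as $G_i(t)=t+O(t^2)$, its compositional inverse satisfies $G_2^{-1}(t)=t+O(t^2)$, so that $\sigma(x)=G_2^{-1}(G_1(x))=x+O(x^2)$. In particular $\sigma(0)=0$ and the linear coefficient of $\sigma$ equals $1$, so by the definition of strict isomorphism $\sigma$ is invertible. One can exhibit the inverse explicitly as $\sigma^{-1}(x)=G_1^{-1}(G_2(x))$, the analogous series built from the two logarithms in the opposite order, and a direct substitution gives $\sigma^{-1}(\sigma(x))=G_1^{-1}\big(G_2(G_2^{-1}(G_1(x)))\big)=x$, with the reverse composition handled symmetrically.

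I do not anticipate a real obstacle, since the entire substance is packaged in Lemma~\ref{lemma4}; the single subtlety worth flagging is the invertibility of $\sigma$, which rests on the standing normalization $G_i(t)=t+O(t^2)$ of the group logarithm. If the logarithms were required only to be invertible rather than tangent to the identity, then $\sigma$ would remain an isomorphism but would no longer be strict, so the normalization is exactly what upgrades the conclusion to a \emph{strict} isomorphism.
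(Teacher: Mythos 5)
Your proposal is correct and follows essentially the same route as the paper: the paper gives no separate proof, stating only that the corollary is ``an immediate consequence of Lemma~\ref{lemma4}'' after setting $\sigma_1\equiv\sigma$, which is exactly your specialization to $a=b=1$. Your additional verification that $\sigma(x)=x+O(x^2)$ (hence a \emph{strict} isomorphism, with explicit inverse $G_1^{-1}\circ G_2$) merely makes explicit what the paper leaves to its definition of strict isomorphism, and is a worthwhile detail to include.
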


We shall call \textit{canonical} this isomorphism. An interesting
particular case is obtained when $G_1=G_2=G$. In this case,
$\sigma_a(x)$ reduces to the well known function
$\rho_{a}(x)=G^{-1}(aG(x))$, which plays a crucial role un the
standard theory of formal groups, since it endows a formal group
$\Phi(x,y)$ with the structure of $A$-module.  In order to clarify the
role of $\rho_{a}(x)$ in the theory of formal rings we observe that,
for $\mathcal{R}=(A, \Phi, \Psi)$ formal ring over $A$ with group
logarithm $G(t)$, one can easily prove the following result.

\begin{lemma}
  The following relations hold:
  \begin{align}
    \Phi(\rho_{a}(x), \rho_{a}(y))&= \rho_{a}(\Phi(x,y))\,,\label{rel1}\\
    \Psi(\rho_{a}(x), \rho_{b}(y))&= \rho_{ab}(\Psi(x,y))\,,\label{rel2}\\
    \Psi(\rho_{a}(x), \rho_{b}(x))&= \Psi(\rho_{ab}(x),x))\,.\label{rel3}
  \end{align}
\end{lemma}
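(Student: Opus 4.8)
The plan is to recognize that all three relations are specializations of Lemma~\ref{lemma4}, applied to the single formal ring $\mathcal{R}=(A,\Phi,\Psi)$ by taking $G_1=G_2=G$. Under this identification the map $\sigma_a(x)=G_2^{-1}(aG_1(x))$ collapses to $\rho_a(x)=G^{-1}(aG(x))$, so that relation~\eqref{rel1} is nothing but the first identity of Lemma~\ref{lemma4} with $\Phi_1=\Phi_2=\Phi$, while relation~\eqref{rel2} is the second identity with $\Psi_1=\Psi_2=\Psi$ and the substitution $\sigma_a,\sigma_b,\sigma_{ab}\mapsto\rho_a,\rho_b,\rho_{ab}$. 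No fresh computation is needed for these two.

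For relation~\eqref{rel3} I would first record the normalization $\rho_1(x)=G^{-1}(G(x))=x$, so that $\rho_1$ is the identity series. Then I would invoke~\eqref{rel2} twice. Setting $y=x$ in~\eqref{rel2} gives $\Psi(\rho_a(x),\rho_b(x))=\rho_{ab}(\Psi(x,x))$; applying~\eqref{rel2} again with the pair $(ab,1)$ in place of $(a,b)$ and using $\rho_1=\mathrm{id}$ yields $\Psi(\rho_{ab}(x),x)=\rho_{ab}(\Psi(x,x))$. Comparing the two right-hand sides produces~\eqref{rel3}. Alternatively, and perhaps more transparently, one substitutes the explicit multiplicative law $\Psi(x,y)=G^{-1}(G(x)\,G(y))$ directly and uses $G(\rho_c(x))=cG(x)$: both sides then collapse to $G^{-1}(ab\,G(x)^2)$.

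I do not expect a genuine obstacle here, since the substantive content is already carried by Lemma~\ref{lemma4} and by the homomorphism property of $G$, which turns $\Phi$ into $+$ and $\Psi$ into $\cdot$. The only point deserving a moment's care is that identities~\eqref{rel1}--\eqref{rel3} presuppose that $\Psi$ is precisely the multiplicative law $G^{-1}(G(x)\,G(y))$ built from the same logarithm $G$; once this compatibility is fixed, every statement reduces to an elementary identity for the scalars $a$, $b$, $ab$ in $A$.
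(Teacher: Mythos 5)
Your proof is correct, and it reaches the result by a different organization than the paper, even though the computational engine is the same. The paper cites the literature for \eqref{rel1}, proves \eqref{rel2} by the direct manipulation $(G\circ\Psi)\left(\rho_{a}(x),\rho_{b}(y)\right)=a\,G(x)\,b\,G(y)=(G\circ\rho_{ab}\circ\Psi)(x,y)$ followed by an application of $G^{-1}$, and settles \eqref{rel3} with the remark that it ``can be proved similarly''. You instead observe that \eqref{rel1} and \eqref{rel2} are precisely the two identities of Lemma~\ref{lemma4} specialized to $\mathcal{R}_1=\mathcal{R}_2=\mathcal{R}$, $G_1=G_2=G$, under which $\sigma_a$ becomes $\rho_a$ --- a specialization the paper itself announces in the sentence preceding this lemma but does not actually exploit in its proof --- and you then obtain \eqref{rel3} purely formally from \eqref{rel2}: setting $y=x$ gives $\Psi(\rho_a(x),\rho_b(x))=\rho_{ab}(\Psi(x,x))$, while \eqref{rel2} applied to the pair $(ab,1)$ together with $\rho_1=\mathrm{id}$ gives $\Psi(\rho_{ab}(x),x)=\rho_{ab}(\Psi(x,x))$. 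Your route buys economy and explicit logical structure: no computation is repeated (Lemma~\ref{lemma4} is proved by exactly the manipulation the paper redoes here), and \eqref{rel3} becomes a consequence of \eqref{rel2} rather than an analogous verification; your fallback direct computation, with both sides collapsing to $G^{-1}\left(ab\,G(x)^{2}\right)$, is what the paper's ``similarly'' amounts to. You are also right to flag the hypothesis both arguments need: the computations require $G\circ\Psi(x,y)=G(x)\cdot G(y)$, i.e.\ that $\Psi$ is the multiplicative law of Theorem~\ref{teo} built from the same logarithm $G$. The paper's statement (``formal ring with group logarithm $G$'') leaves this implicit, and its proof --- like that of Lemma~\ref{lemma4} --- uses it silently; making it explicit, as you do, is the more careful reading.
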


\begin{proof}
  Relation \eqref{rel1} is well known \cite{Haze}. Concerning
  Eq.~\eqref{rel2}, we observe that
  \[
    \begin{aligned}
      (G\circ\Psi)\left(\rho_{a}(x),\rho_{b}(y)\right)&=G(\rho_{a}(x))\,G(\rho_{b}(y))=a\,G(x)\,b\,G(y)\\
      &=ab\,G(x)G(y)=(G\circ\rho_{ab}\circ\Psi)(x,y)
    \end{aligned}
  \]
  and apply $G^{-1}$ to both sides. Relation~\eqref{rel3} can be
  proved similarly.
\end{proof}

Now, let $\mathcal{R}=(A, \Phi, \Psi)$ be a formal ring over a ring of
characteristic zero $A$, with associated group logarithm $G(x)$. Let
us introduce the formal power series
\[
\Psi_{a}(x,y):=
\rho_{a}(\Psi(x,y))=\Psi(\rho_{a}(x),y)= G^{-1}(G(x)\,a\,G(y)) \ .
\]
One can prove directly the following

\begin{proposition}
  For every $a \in A$, the triple
  $\mathcal{R}_{a}=(A, \Phi, \Psi_{a})$ is a formal ring homomorphic
  to the ring $\mathcal{R}=(A, \Phi, \Psi)$.
\end{proposition}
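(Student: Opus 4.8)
The plan is to establish the claim in two stages: first verify that $\mathcal{R}_a$ satisfies Definition~\ref{def1}, and then exhibit an explicit homomorphism onto $\mathcal{R}$. For the first stage, the additive law $\Phi$ is unchanged, so it remains a commutative formal group law, and it suffices to check that $\Psi_a$ is associative, commutative, and distributive with respect to $\Phi$. I would carry out all of these verifications in ``logarithmic coordinates'', using the single identity $G(\Psi_a(x,y))=a\,G(x)\,G(y)$, which follows at once from $\Psi_a(x,y)=G^{-1}(G(x)\,a\,G(y))$. Under $G$ the law $\Phi$ becomes ordinary addition and $\Psi_a$ becomes the scaled product $(u,v)\mapsto a\,uv$; associativity and both distributive relations then reduce to the associativity and distributivity of multiplication over addition in $A$, while commutativity is immediate from the commutativity of $A$. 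Since $G$ is invertible, applying $G^{-1}$ recovers the corresponding identities for $\Psi_a$, so $\mathcal{R}_a=(A,\Phi,\Psi_a)$ is a commutative formal ring.

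For the second stage I propose the map $\phi:=\rho_a$ as a homomorphism $\mathcal{R}_a\to\mathcal{R}$. It is a well-defined element of $A\llbracket x\rrbracket$ with $\phi(0)=0$, since $\rho_a(x)=a\,x+O(x^2)$. Compatibility with the additive laws is precisely relation~\eqref{rel1}, $\rho_a(\Phi(x,y))=\Phi(\rho_a(x),\rho_a(y))$. The multiplicative compatibility I must verify is $\rho_a(\Psi_a(x,y))=\Psi(\rho_a(x),\rho_a(y))$: using $\Psi_a(x,y)=\rho_a(\Psi(x,y))$ together with the composition rule $\rho_a\circ\rho_a=\rho_{a^2}$ (immediate from $G(\rho_a(\rho_a(z)))=a^2\,G(z)$), the left-hand side equals $\rho_{a^2}(\Psi(x,y))$; by relation~\eqref{rel2} the right-hand side equals $\rho_{a\cdot a}(\Psi(x,y))=\rho_{a^2}(\Psi(x,y))$ as well. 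Hence $\rho_a$ satisfies both defining relations of a homomorphism of formal rings, and $\mathcal{R}_a$ is homomorphic to $\mathcal{R}$.

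The only genuinely delicate point is the choice of direction. One might instead hope to map $\mathcal{R}\to\mathcal{R}_a$ by some $\rho_c$, but a direct computation in logarithmic coordinates shows this forces $c=1/a$, which need not belong to $A$ when $a$ is not a unit. Choosing the direction $\mathcal{R}_a\to\mathcal{R}$ realized by $\rho_a$ avoids any inversion of $a$ and works for every $a\in A$, exactly as the statement requires. Beyond this bookkeeping no step presents a serious obstacle: once the identity $G(\Psi_a(x,y))=a\,G(x)G(y)$ is in hand, both stages are short computations drawing only on the commutative ring axioms of $A$ and on relations~\eqref{rel1}--\eqref{rel2} already established.
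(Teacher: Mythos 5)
Your proof is correct and is essentially the argument the paper intends: the paper omits the verification (``One can prove directly the following''), but the direct check it has in mind is exactly your computation in logarithmic coordinates via $G(\Psi_a(x,y))=a\,G(x)G(y)$ combined with relations \eqref{rel1}--\eqref{rel2}, which is how every other proof in that section proceeds. Your added observation that the homomorphism must run in the direction $\rho_a:\mathcal{R}_a\to\mathcal{R}$, since the reverse direction would force inverting $a$, is a worthwhile clarification of a point the proposition's wording leaves ambiguous.
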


\begin{definition} \label{a-formal ring} The formal ring
  $\mathcal{R}_{a}=(A, \Phi, \Psi_{a})$ will be said to be the
  $a$-ring associated with $\mathcal{R}$.
\end{definition}

\begin{remark}
  From the previous discussion we can infer that Problem 1 above does
  not admit a unique solution. Indeed, if $\Psi(x,y)$ is compatible
  with a given formal group law $\Phi(x,y)$, i.e. they generate a
  formal ring structure, then for any $a\in A$ we have that
  $\Psi_{a}(x,y)$ is also compatible; consequently, the $a$-ring
  associated provides another solution to the problem.
\end{remark}

\section{N-dimensional formal rings}\label{sec.ndim}
In this section, we shall propose an $n$-dimensional generalization of
the previous construction. We recall that an $n$--dimensional formal group law $\Phi=(\Phi_1,\ldots,\Phi_n)$ over a ring $R$ is
an $n$-tuple of formal power series
$\Phi_i(\bx,\by)$ in the $2n$ indeterminates
$\bx=(x_1,\ldots,x_n)$, $\by=(y_1,\ldots,y_n)$ such that
\begin{enumerate}[label=\roman*)]
\item
  $\Phi_i(\bx,\by)=x_i+y_i \hspace{3mm}\text{mod degree 2
    terms},$
\item
  $\Phi\left(\Phi(\bx,\by),\bz\right)=\Phi\left(\bx,\Phi(\by,\bz)\right).$
\end{enumerate}
When $\Phi_i(\bx,\by)=\Phi_i(\by,\bx)$ for $i=1,\ldots,n$ the
formal group law is said to be commutative.

By analogy with the one-dimensional construction previously developed,
we shall define below a natural notion of an $n$-dimensional formal
ring structure, defined starting from an underlying $n$-dimensional
formal group law.

\subsection{Definition and main properties}

\begin{definition}\label{ndimfr}
  Let $\left(A,+,\cdot\right)$ be a commutative unital ring. An
  $n$-dimensional formal ring over $A$ is a triple
  $\mathcal{R}:=(A,\Phi,\Psi)$ where $\Phi=(\Phi_1,\ldots,\Phi_n)$ and $\Psi=(\Psi_1,\ldots,\Psi_n)$ are two sets of $n$ formal series $\Phi_i(\bx,\by)$ and $\Psi_i(\bx,\by)$ each
  of them in $2n$ variables $(x_1,\ldots,x_n,y_1,\ldots,y_n)$ such that
  \begin{enumerate}[label=\roman*)]
  \item $\Phi(\bx,\by)$ defines a commutative $n$-dimensional
    formal group law.
  \item
    $\Psi\left(\Psi(\bx,\by),\bz\right)=\Psi\left(\bx,\Psi(\by,\bz)\right).$
  \item The following distributivity property holds:
    \begin{align*} \Psi(\bx,\Phi(\by,\bz))&=\Phi(\Psi(\bx,\by),\Psi(\bx,\bz)), \\
      \Psi(\Phi(\bx,\by),\bz)&=\Phi(\Psi(\bx,\bz),\Psi(\by,\bz)).
    \end{align*}
  \end{enumerate}
  In particular, whenever $\Psi(\bx,\by)=\Psi(\by,\bx)$,
  the formal ring will be said to be commutative.
\end{definition}

\begin{example}
  A simple realization of an $n$-dimensional formal ring is obtained
  by choosing $\Phi=(\Phi_1,\ldots,\Phi_n)$ and
  $\Psi=(\Psi_1,\ldots,\Psi_n)$ as follows. Consider $n$ pairs
  $\tilde\Phi_i\,,\tilde\Psi_i\in A\llbracket x,y\rrbracket$ of formal
  power series defining $n$ one-dimensional formal rings. With a
  slight abuse of notation let
  $\Phi_i(\bx,\by)\equiv\tilde\Phi_i(x_i,y_i)$ and
  $\Psi_i(\bx,\by)\equiv\tilde\Psi_i(x_i,y_i)$ for
  $i=1,\ldots,n$. It follows that the pair $\Phi,\Psi$ is an
  $n$-dimensional formal ring. A trivial realization obviously arises
  from the additive $n$-dimensional formal group law
  $\Phi_i(\bx,\by)= x_i+y_i$ by choosing
  $\Psi_i(\bx,\by)= x_i\cdot y_i$.
\end{example}

A natural question is whether there exists a general approach allowing
us to construct a suitable product compatible with a given
$n$-dimensional formal group law $\Phi(x,y)$.  Under the hypothesis
that the base ring $A$ is of characteristic zero, which ensures that
the group law admits a group logarithm, the following procedure
holds. Given a family $G=(G_{1},\ldots, G_n)$ of formal power series in $n$ variables $\bx=(x_1,\ldots,x_n)$ with
\begin{equation}\label{Gindim}
G_{i}(\bx)=x_i+\sum_{s_1+\cdots s_n>1}\gamma_{(s_1,\ldots,s_n)}\,x_{1}^{s_1}\,\cdots\,x_{n}^{s_n} \ ,
\end{equation}
we first define an $n$-dimensional formal group law by $\Phi(\bx,\by):=G^{-1}(G(\bx)+G(\by))$ where the inverse $n$-dimensional formal power series is obtained by means of the standard Lagrange
inversion procedure.  Then we introduce the set of $n$ formal power
series $\Psi(\bx,\by)=(\Psi_{1}(\bx,\by),\ldots,\Psi_{n}(\bx,\by))$
defined by
\begin{equation}\label{PsindimG}
\Psi(\bx,\by):=
G^{-1}\big(G_1(\bx)\mss G_1(\by),\ldots,
G_n(\bx)\mss G_n(\by)\big),
\end{equation}
so that we can state the following result.

\begin{theorem} \label{theomain} Let $A$ be a characteristic zero
  ring and let $G=(G_1,\ldots,G_n)$ be a set of formal power series where each $G_i(\bx)$ can be written as in Eq.~\eqref{Gindim}. Let~$\Phi(\bx,\by)=G^{-1}{(G(\bx)+G(\by))}$, and $\Psi(\bx,\by)$ be the set of $n$ formal series~\eqref{PsindimG}. Then the triple $\mathcal{R}^{(G)}_{n}:=(A, \Phi, \Psi)$ is an $n$-dimensional
  commutative formal ring.
\end{theorem}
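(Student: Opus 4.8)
The plan is to exhibit $G$ as an invertible formal change of coordinates that conjugates both $\Phi$ and $\Psi$ into the componentwise ring operations on $n$-tuples, so that every axiom of Definition~\ref{ndimfr} becomes an identity inherited from the product ring structure on $A^n$. For $n$-tuples of power series $\bu=(u_1,\ldots,u_n)$, $\bv=(v_1,\ldots,v_n)$ write $\bu\oplus\bv:=(u_1+v_1,\ldots,u_n+v_n)$ and $\bu\odot\bv:=(u_1 v_1,\ldots,u_n v_n)$ for the componentwise sum and product. Since each $G_i$ begins with $x_i$, the Jacobian of $G$ at the origin is the identity matrix, so by multivariate Lagrange inversion $G$ has a unique compositional inverse $G^{-1}$, again an $n$-tuple of formal power series, with $G\circ G^{-1}=G^{-1}\circ G=\mathrm{id}$ and $G(\mathbf 0)=\mathbf 0$. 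Hence $\Phi$ and $\Psi$ are well defined, and from the definitions one reads off the two conjugation identities
\[
G(\Phi(\bx,\by))=G(\bx)\oplus G(\by),\qquad G(\Psi(\bx,\by))=G(\bx)\odot G(\by),
\]
which are the engine of the whole argument. Moreover $G(\bx)\odot G(\by)$ has neither constant nor linear part, so $\Psi_i(\bx,\by)=x_i y_i+\cdots$, confirming that $\Psi$ is a legitimate $n$-tuple of series.

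First I would verify axiom (i), that $\Phi$ is a commutative $n$-dimensional formal group law, which is the $n$-dimensional analogue of Theorem~\ref{teo}. The condition $\Phi_i(\bx,\by)=x_i+y_i$ modulo degree $2$ follows from the linear part of $G$ being the identity; the identity-element property follows from $G(\mathbf 0)=\mathbf 0$; and associativity together with commutativity are transported from those of $\oplus$ by applying the first conjugation identity repeatedly and cancelling $G$ against $G^{-1}$.

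The remaining axioms follow by the same conjugation principle applied to the second identity. Associativity of $\Psi$ reduces, after applying $G$ to both sides, to the associativity of $\odot$; the two distributivity laws reduce to the componentwise distributivity $\bu\odot(\bv\oplus\bw)=(\bu\odot\bv)\oplus(\bu\odot\bw)$; and commutativity of $\Psi$ reduces to that of $\odot$. Concretely, for the first distributivity law one computes
\begin{align*}
G\big(\Psi(\bx,\Phi(\by,\bz))\big)&=G(\bx)\odot\big(G(\by)\oplus G(\bz)\big)\\
&=\big(G(\bx)\odot G(\by)\big)\oplus\big(G(\bx)\odot G(\bz)\big)=G\big(\Phi(\Psi(\bx,\by),\Psi(\bx,\bz))\big),
\end{align*}
and then applies $G^{-1}$; the second distributivity law and the associativity of $\Psi$ are handled identically. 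These are the verbatim componentwise analogues of the chains of equalities in Theorem~\ref{teo} and Lemma~\ref{lemma4}.

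I expect the only point requiring genuine care to be the existence and good behaviour of the multivariate inverse $G^{-1}$, together with the associativity of substitution of several-variable formal power series---the Lagrange-inversion bookkeeping that legitimises treating $G$ as an invertible conjugating map. Once this formal framework is recorded, the verification of (i)--(iii) is purely mechanical, since each formal-ring axiom is invariant under conjugation by the substitution $G$ and holds transparently for the model $(A^n,\oplus,\odot)$.
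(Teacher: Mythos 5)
Your proposal is correct and takes essentially the same approach as the paper: the two ``conjugation identities'' $G\circ\Phi(\bx,\by)=G(\bx)\oplus G(\by)$ and $G\circ\Psi(\bx,\by)=G(\bx)\odot G(\by)$ are exactly the componentwise relations $(G_i\circ\Phi)(\bx,\by)=G_i(\bx)+G_i(\by)$ and $(G_i\circ\Psi)(\bx,\by)=G_i(\bx)\,G_i(\by)$ that the paper uses in its chains of equalities for associativity and distributivity, and your final application of $G^{-1}$ is the paper's concluding injectivity step. Your packaging as conjugation to the model $(A^n,\oplus,\odot)$, together with the explicit remark that invertibility of $G$ rests on its identity Jacobian and Lagrange inversion, is a slightly more systematic write-up of the same argument, not a different one.
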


\begin{proof}
  Commutativity of $\Phi(\bx,\by)$ and $\Psi(\bx,\by)$ is obvious. Let us prove associativity. The
  following identities hold:
  \begin{equation}\label{ident}
  G_{i}
  \big(G^{-1}(\bx)\big)=(G^{-1})_i\big(G(\bx)\big)=x_i\,.
  \end{equation}
  Thus, from~\eqref{PsindimG} we obtain
  \[
    \begin{aligned}
      (G_i\circ\Psi)(\bx,\Psi(\by,\bz))&=G_i(\bx)\,(G_i\circ\Psi)(\by,\bz)=G_i(\bx)\,G_i(\by)\,G_i(\bz)\\
      &=(G_i\circ\Psi)(\bx,\by)\,G_i(\bz)=(G_i\circ\Psi)(\Psi(\bx,\by),\bz)\,.
    \end{aligned}
  \]
  Concerning distributivity,
  \[
    \begin{aligned}
      (G_i\circ\Psi)(\bx,\Phi(\by,\bz))&=G_i(\bx)\,(G_i\circ\Phi)(\by,\bz)=G_i(\bx)\,G_i(\by)+G_i(\bx)\,G_i(\bz)\\
      &=(G_i\circ\Psi)(\bx,\by)+(G_i\circ\Psi)(\bx,\bz)\\
      &=(G_i\circ\Phi)\left(\Psi(\bx,\by),\Psi(\bx,\bz)\right).
    \end{aligned}
  \]
  The proof is completed by noting that $G_i\circ\Psi(\bz)=G_i\circ\Psi_i(\bz')$ implies $G\circ\Psi(\bz)=G\circ\Psi(\bz')$ and then $\Psi(\bz)=\Psi(\bz')$. In the same vein, $G_i\circ\Psi(\bz)=G_i\circ\Phi_i(\bz')$ implies $\Psi(\bz)=\Phi(\bz')$.
\end{proof}

\subsection{Curves in formal rings}
In the standard theory of curves defined in formal groups, one can
regard the power series $\Phi(x,y)$ as a recipe for manifacturing
ordinary groups. Indeed, computing a formal group law over two power
series $\gamma_1(t)$, $\gamma_2(t)$ can be interpreted as a ``sum'' of
curves. By analogy, one can generalize this idea to formal rings.

\begin{definition}\label{defcurves}
  Let $\mathcal{R}=(A, \Phi,\Psi)$ be an $n$-dimensional formal ring
  over a unital ring $A$. A curve in $\mathcal{R}$ is an $n$-tuple of
  formal power series $\gamma(t)\in A\llbracket t\rrbracket$ in one
  variable $t$ such that $\gamma(0)=0$.
\end{definition}

Thus, we can use the power series $\Phi$ and $\Psi$ to define an
addition of curves
\[
\gamma_{1}(t) +_{\Phi} \gamma_2(t) :=
\Phi(\gamma_1(t),\gamma_2(t))
\]
and a product of curves
\[
\gamma_{1}(t) \otimes_{\Psi} \gamma_2(t) :=
\Psi(\gamma_1(t),\gamma_2(t))\,.
\]
Therefore, the set of all power series (without constant term) turns out to be a ring, that we
shall denote by $\mathscr{C}(\Phi,\Psi; A)$. This ring is commutative
assuming that both $\Phi$ and $\Psi$ are.  The zero element of
$\mathscr{C}(\Phi,\Psi; A)$ is represented by the zero power
series. Also, as is well known, there exists a power series $i(x)$
such that $\Phi(x,i(x))=0$. This allows us to define the inverse as
\[
\gamma(t) +_{\Phi} i(\gamma(t))=0\,.
\]
Notice that, although the ring of curves $\mathscr{C}(\Phi,\Psi; A)$ can not be unital,
since all formal series considered are of the form $f(x)= x +O(x^2)$
the expression $\gamma_{1}(t) \otimes_{\Psi} 1$ (where $1$ is the
multiplicative unity in $A$) still makes perfect sense.

\subsection{A filtration of subrings} Let $\mathcal{R}$ be an
$n$-dimensional formal ring over $A$. Given a curve in $\mathcal{R}$
of the form $\gamma(t)=\sum_{n=1}^{\infty} c_{n} t^{n}$,
$c_{n}\in A^{n}$, we shall consider the subset of curves whose first
$n-1$ coefficient vanish: $c_i=0$, $i=1,\ldots,n-1$. Let us denote by
$\mathscr{C}^{n}(\Phi,\Psi; A)$ the subset of such curves.  It is easy
to prove that $\mathscr{C}^{n}(\Phi,\Psi; A)$ is a subring of
$\mathscr{C}(\Phi,\Psi; A)$. Therefore, we can define the following
filtration of $\mathscr{C}(\Phi,\Psi; A)$ by subrings
\begin{equation}\label{filtr}
\mathscr{C}(\Phi,\Psi; A)= \mathscr{C}^{1}(\Phi,
\Psi; A) \supset \mathscr{C}^{2}(\Phi, \Psi; A)\supset \cdots
\supset\mathscr{C}^{n}(\Phi,\Psi; A)\supset \cdots
\end{equation}
with the property
\[
\bigcap_{n} \mathscr{C}^{n}(\Phi,\Psi; A) = 0.
\]
This filtration endows naturally the set $\mathscr{C}(\Phi,\Psi; A)$
with a topology.  It is interesting to observe that given a formal
group law $\Phi(x,y)$, the construction~\eqref{filtr} is not unique:
for any $a\in A$ there exists a filtration of subrings
$\mathscr{C}(\Phi,\Psi_{a}; A)$, that we shall call an $a$-filtration.

\section{Witt vectors, the Witt ring and formal ring structures} \label{sec.Witt}
\subsection{Generalized Witt's equations}
There is an interesting relation between the notions of formal ring and the celebrated  ring of Witt vectors, introduced in~\cite{Witt1936} in relation with unramified extensions of $p$-adic number fields. Witt vectors have also found applications in the study of algebraic varieties over a field of positive characteristic~\cite{Mumford1966} and in the theory of commutative algebraic groups~\cite{Serre1959}. Also, they have been related to the theory of formal groups~\cite{Haze}.
Precisely, the aim of this section is  to prove that the ring of Witt vectors can be regarded as a particular instance of a general class of formal ring structures.

We recall that given a an associative, commutative ring $A$ with unit element, Witt vectors are infinite sequences $a=(a_i)_{i\in\NN}\subset A$, added and multiplied according to the Witt rules (for details, see~\cite{Lang1974}).

First of all we shall derive Witt's construction from the point of view of formal rings. To this purpose, let us start illustrating the {\em truncated} construction for fixed $n\in\NN\setminus\{0\}$. Given a group logarithm $G=G(\bx)$, the formal group law associated satisfies $G\circ\Phi(\bx,\by)=G(\bx)+G(\by)$, i.e., for all $1\le i\le n$ we have
\begin{equation}\label{eq:witt1}
G_i(\Phi_1(\bx,\by),\ldots,\Phi_n(\bx,\by))=G_i(\bx)+G_i(\by) \ .
\end{equation}
We shall assume (this is indeed the case in the Witt construction) that the group logarithm has the form
\begin{equation}\label{eq:gtriang}
\begin{aligned}
G_1(\bx)&=g_1(x_1) \\
G_2(\bx)&=g_2(x_1,x_2) \\
&\,\,\vdots\\
G_n(\bx)&=g_n(x_1,\ldots,x_n)
\end{aligned}
\end{equation}
where $(g_1,\ldots,g_n)$ are formal series, each having the form~\eqref{Gindim}. Thus, from Eq.~\eqref{eq:witt1} we deduce $g_1(\Phi_1(\bx,\by))=g_1(x_1)+g_1(y_1)$
which implies that $\Phi_1$ exists and it only depends on $(x_1, y_1)$.
Iterating this argument, we observe that
\begin{equation}\label{eq:fWE}
g_i(\Phi_1,\ldots,\Phi_i)=g_i(x_1,\ldots,x_i)+g_i(y_1,\ldots,y_i)
\end{equation}
which implies that, if it exists, $\Phi_i(\bx,\by)\equiv\Phi_i(x_1,\ldots,x_i,y_1,\ldots,y_i)$. Equations~\eqref{eq:fWE} for all $i$ will be called the \textit{first generalized Witt equations}: they generalize the original construction due to Witt for the first composition law, and determine a typical {\em triangular} structure in the arguments of $\Phi$.
\par
Concerning the second composition law, by means of the same reasoning we obtain the \textit{second generalized Witt equations}
\begin{equation}\label{eq:sWE}
g_i(\Psi_1,\ldots,\Psi_n)=g_i(x_1,\ldots,x_i)\cdot g_i(y_1,\ldots,y_n)
\end{equation}
which imply $\Psi_i(\bx,\by)\equiv\Psi_i(x_1,\ldots,x_i,y_1,\ldots,y_i)$.


\subsection{Two families of solutions to the generalized Witt's equations}
The generalized Witt equations~\eqref{eq:fWE}-\eqref{eq:sWE} admit at least a family of separated solutions that can be obtained in an iterative way, as we will show. Consider for all $n\in\mathbb{N}\setminus\{0\}$ the {\em generalized  ghosts} defined by
\begin{equation}\label{eq:chif}
g_n(x_1,\ldots,x_n):=\sum_{i=1}^{n}\chi^{(n)}_i(x_i),\qquad\chi^{(n)}_i(x)\in A\llbracket x\rrbracket\,.
\end{equation}
Note that Eqs.~\eqref{Gindim}-\eqref{eq:gtriang} imply $\chi^{(n)}_n(x)=x+O(x^2)$.
Explicitly,
$\Phi_1(x_1,y_1)=({\chi_1^{(1)}})^{-1}(\chi_1^{(1)}(x_1)+\chi_1^{(1)}(y_1))$ and $\Psi_1(x_1,y_1)=({\chi_1^{(1)}})^{-1}(\chi_1^{(1)}(x_1)\cdot \chi_1^{(1)}(y_1))$. These are the initial conditions of the recursion relations
\begin{align}
\chi_n^{(n)}\circ\Phi_n&=\sum_{i=1}^{n}\left(\chi^{(n)}_i(x_i)+\chi^{(n)}_i(y_i)\right)-\sum_{i=1}^{n-1}\chi^{(n)}_i\circ\Phi_i\label{eq:rechif1}\\
\chi_n^{(n)}\circ\Psi_n&=\sum_{i,j=1}^{n}\chi^{(n)}_i(x_i)\cdot\chi^{(n)}_j(y_j)-\sum_{i=1}^{n-1}\chi^{(n)}_i\circ\Psi_i\label{eq:rechif2}
\end{align}
which, since $\chi^{(n)}_n$ is invertible, provide the explicit solutions $\Phi_n$ and $\Psi_n$ of Eqs.~ \eqref{eq:fWE}-\eqref{eq:sWE}. Thus, the series $g_n(x_1,\ldots,x_n)\in A\llbracket x_1,\ldots,x_n \rrbracket$, once substituted into Eqs.~\eqref{eq:gtriang} define a formal ring structure {\em for all} $n\in\NN\setminus\{0\}$, according to Theorem \ref{theomain}. The choice~\eqref{eq:chif}, interesting for its simplicity, is obviously just one of the many possibilities allowed. A much more general solution is the partially separated one, corresponding to
\begin{equation}\label{eq:hchi}
\begin{aligned}
g_{n}(x_1,\ldots,x_n)&:= \tau_{n-1}(x_1,\ldots x_{n-1})+ \omega_{n}(x_n)\ , \qquad n>1\\
g_1(x_1)&:=\omega_1(x_1)\ .
\end{aligned}
\end{equation}
As above, we have $\omega_n(x)=x+O(x^2)$ for all $n$. Again, $\Phi_1$ and $\Psi_1$ are explicitly determined as the initial conditions of the recursion relations
\begin{align}
\omega_n\circ\Phi_n&=\tau_{n-1}(x_1,\ldots,x_{n-1})+\tau_{n-1}(y_1,\ldots,y_{n-1})\label{eq:rehchi1}\\
&\hphantom{=\tau_{n-1}(x_1,\ldots,x_{n-1})}+\omega_n(x_n)+\omega_n(y_n)-\tau_{n-1}(\Phi_1,\ldots,\Phi_{n-1})\,,\nn\\[5pt]
\omega_n\circ\Psi_n&=\tau_{n-1}(x_1,\ldots,x_{n-1})\cdot\tau_{n-1}(y_1,\ldots,y_{n-1})\label{eq:rehchi2}\\
&\hphantom{=\tau}+\tau_{n-1}(x_1,\ldots,x_n)\cdot\omega_n(y_n)+\tau_{n-1}(y_1,\ldots,y_n)\cdot\omega_n(x_n)\nn\\
&\hphantom{=\tau_{n-1}(x_1,\ldots,x_n)\cdot\tau_{n-1}}+\omega_n(x_n)\cdot\omega_n(y_n)-\tau_{n-1}(\Psi_1,\ldots,\Psi_{n-1})\nn\,.
\end{align}

For any natural number $n$, considerations above imply that any $n$-dimensional formal ring defines a new ring structure over the set of truncated Witt vectors of length $n$, namely finite $n$-tuples $a=(a_1,\ldots,a_n)$, $a_i\in A$. 

\begin{definition}\label{def:GWnA}
Let $\mathcal{R}=(A,\Phi,\Psi)$ be an $n$-dimensional formal ring over $A$ such that its group logarithm is of the form~\eqref{eq:gtriang}. The corresponding generalized truncated Witt ring $GW_n(A)$ of truncated Witt vectors of length $n$ over $A$ is the ring defined by
\begin{equation}\label{eq:GWnA}
\begin{aligned}
(a_1,\ldots,a_n)\oplus(b_1,\ldots,b_n)&=\left(\mss\Phi_1(a_1,b_1),\ldots,\Phi_n(a_1,\ldots,a_n,b_1,\ldots,b_n)\mss\right),\\
(a_1,\ldots,a_n)\otimes(b_1,\ldots,b_n)&=\left(\mss\Psi_1(a_1,b_1),\ldots,\Psi_n(a_1,\ldots,a_n,b_1,\ldots,b_n)\mss\right).\\
\end{aligned}
\end{equation}
The components~\eqref{eq:gtriang} of the group logarithm underlying the ring $\mathcal{R}$ will be called the generalized ghosts of the truncated Witt vectors.
\end{definition}

It is straightforward to proof that $GW_n(A)$ is well-defined as a ring. Associativity and distributivity in $GW_n(A)$ are a direct consequence of the analogous properties valid in the formal ring $\mathcal{R}$.

Motivated by the previous discussion, one can introduce a general ring structure over the set of infinite Witt sequences $a=(a_i)_{i\in\NN}\subset A$. 

\begin{definition}\label{def:GWA}
The generalized Witt ring $GW(A)$ of Witt vectors over $A$ is the ring defined by
\begin{equation}\label{eq:GWA}
\begin{aligned}
(a_1,a_2,\ldots)\oplus(b_1,b_2,\ldots)&=\left(\mss\Phi_1(a_1,b_1),\Phi_2(a_1,a_2,b_1,b_2),\ldots\mss\right),\\
(a_1,a_2,\ldots)\otimes(b_1,b_2,\ldots)&=\left(\mss\Psi_1(a_1,b_1),\Psi_2(a_1,a_2,b_1,b_2),\ldots\mss\right).\\
\end{aligned}
\end{equation}
\end{definition}

\begin{remark}
In $GW(A)$ the zero element and the unit are given, respectively, by $(0,0,\ldots)$ and $(1,0,\ldots)$ whereas their truncated versions of length $n$ give the corresponding elements in $GW_n(A)$.
Finally, note that the previous definitions imply the existence of formal power series $r_n(x_1,\ldots,x_n)\in A\llbracket x_1,\ldots,x_n\rrbracket$ for all $n$ such that
\[
(a_1,a_2,\ldots)\oplus(r_1(a_1),r_2(a_1,a_2),\ldots)=(0,0,\ldots) \ .
\]
\end{remark}

\subsection{The Witt ring W(A) as a formal ring}
The previous approach allows us to incorporate   the standard Witt construction into the general framework provided by formal ring theory. Indeed,  the well-known ring of Witt vectors $W(A)$ is recovered straightforwardly, if $p$ is a prime number, by means of the choice
\beq \label{Wittpol}
g_n(x_1,\ldots,x_n):=x_1^{p^{n-1}}+px_2^{p^{n-2}}+\cdots+p^{n-1}x_n
\eeq
in Eqs.~\eqref{eq:gtriang}, or equivalently
$
\chi_{i}^{(n)}(x) = p^{i-1} x^{p^{n-i}}
$
in Eqs.~\eqref{eq:chif}. In other words, if the components of the group logarithm~\eqref{eq:gtriang} coincide with the usual ``ghosts'' (or Witt polynomials) associated with Witt vectors~\cite{Lang1974}, then we obtain the standard ring $W(A)$, or by truncation the corresponding ring $W_n(A)$. In order to illustrate this point, let us compute explicitly this ring structure for $n=2$. We choose
\begin{align*}
G_1(x_1,x_2)&=g_1(x_1)=x_1\\
G_2(x_1,x_2)&=g_2(x_1,x_2)=x_1^p+px_2
\end{align*}
according to the triangular structure~\eqref{eq:gtriang}. Therefore we have the following expressions for the components of the compositional inverse of $G=(G_1,G_2)$,
\begin{align*}
(G^{-1})_1(x_1,x_2)&=x_1\\
(G^{-1})_2(x_1,x_2)&=-\frac1px_1^p+\frac1px_2.
\end{align*}
A direct substitution provides the following solution to the first generalized Witt's equations~\eqref{eq:fWE}:
\begin{align*}
\Phi_1(x_1,y_1)&=x_1+y_1,\\
\Phi_2(x_1,x_2,y_1,y_2)&=x_2+y_2-\sum_{i=1}^{p-1}\frac{1}{p}\binom{p}{k}x_{1}^{k}\cdot y_{1}^{p-k}\,.
\end{align*}
Exactly in the same manner, the solution to the second generalized Witt's equations~\eqref{eq:sWE} is provided by:
\begin{align*}
\Psi_1(x_1,y_1)&=x_1\cdot y_1,\\
\Psi_2(x_1,x_2,y_1,y_2)&=x_{1}^{p}\cdot y_2+x_2\cdot y_1^p+p\,x_2\cdot y_2\,.
\end{align*}
\subsection{Universal Witt vectors}
As is well known, the Witt polynomials \eqref{Wittpol} for a fixed prime $p$ are special cases of the construction of the \textit{universal Witt polynomials}, valid for any non-zero natural number. Consequently, a universal Witt ring $UW(A)$ of Witt vectors over $A$ (not depending on a choice of a prime $p$) can be constructed \cite{Lang1974}.
We observe that the universal ring $UW(A)$  can also be recovered as a particular realization of the ring $GW(A)$ by choosing for the components of the group logarithm~\eqref{eq:gtriang} the ghosts
\[
g_n(x_1,\ldots,x_n)=\sum_{d|n}dx_d^{n/d} \ .
\]Note that this choice is in agreement with Eq.~\eqref{Gindim}.

\section{Hirzebruch's multiplicative classes and formal rings.}\label{sec.hir}
In this section we shall establish a connection between the notion of
formal rings and the well-known theory of genera of Hirzebruch, which
is of special interest in algebraic topology~\cite{Hirzebruch1978}.
Precisely, we will construct the formal ring structures associated
with some families of formal groups that are related with
multiplicative genera of relevance in cobordism theory.

\subsection{The Todd genus}
Let $\Omega_{U}$ denote the ring of geometric cobordisms \cite{Quillen}. Then the  genus $T: \Omega_{U} \to \mathbb{Z}$ is the genus of the characteristic formal power series
\[
T(z)=  \frac{-z}{1-e^{-z}}=\frac{-z}{G^{-1}_{T}(z)} \ ,
\]
with the corresponding formal group law given by
$\Phi_{T}(x,y)=G_{T}^{-1}\big(G_{T}(x)+ G_{T}(y)\big)$, with $G_{T}(x)=-\ln(1-x)$, so that
$\Phi_{T}(x,y)= x+y-xy$. Slightly more generally, we shall consider
the one-parametric multiplicative group law
\begin{equation}\label{1dimmultlaw}
\Phi(x,y)= x+y +\alpha xy.
\end{equation}
Recently, this group law has been intimately related to statistical
mechanics, since it represents the composition law associated with the
Tsallis entropy \cite{P2016}. Here we shall assume
$\alpha\in\mathbb{R}$, $\alpha\neq 0$.  Now we have
\[
  G(s)=(1/\alpha)\log(1+\alpha s), \qquad G^{-1}(t)=(e^{\alpha
    t}-1)/\alpha.
\]
Consequently, the formal ring structure associated is provided by the
multiplicative law \eqref{1dimmultlaw} jointly with the product
\[
  \Psi(x,y)=\frac{\exp\big((1/\alpha)\log(1+\alpha x)\log(1+\alpha
    y)\big)-1}\alpha \ ,\qquad
\]
where formally
\[
  \log(1+X): = \sum_{n=1}^{\infty}(-1)^{n+1}\frac{X^{n}}{n}.
\]

\subsection{The $T_{q}$ genus} The law of multiplication associated
\[
  \Phi_{T_{q}}(x,y)= \frac{x+y+(q-1)xy}{1+qxy}
\]
is defined over the ring $\mathbb{Z}\llbracket q\rrbracket$. For
$q=-1,0,1$ we define the genera $c$, $T$ and $L$ respectively.  For
the $c$ genus, $c: \Omega_{U}\to\mathbb{Z}$, we have
\[
  \Phi_{c}(x,y)= \frac{x+y-2xy}{1-xy}, \quad G_{c}(s)= \frac{s}{1-s},
  \quad G_{c}^{-1}(t)= \frac{t}{1+t}\ .
\]
Therefore
\[
  \Psi_{c}(x,y)= \frac{xy}{1-x-y+2xy} \ .
\]
For $q=0$, we have the case of Eq.~\eqref{1dimmultlaw} with $\alpha=-1$ discussed above. For the $L$ genus we obtain
\[
  \Phi_{L}(x,y)= \frac{x+y}{1+xy}, \quad
  G_{L}(s)=\tanh^{-1}(s)=\frac12\log\bigg(\frac{1+s}{1-s}\bigg) \ .
\]
Thus we deduce the composition law
\[
  \Psi_{L}(x,y)=\tanh\left(\frac14\log\left(\frac{1+x}{1-x}\right)\log\left(\frac{1+y}{1-y}\right)\right)\,
  .  
\]

\subsection{The Euler and Abel formal rings} The Euler group law is
defined by
\[
  \Phi_E(x,y)=\frac{x\sqrt{1-y^4}+y\sqrt{1-x^4}}{1+x^2 y^2}\,.
\]
Let us introduce $f(s)=(1-s^4)^{-1/2}$ and $G_{E}(x)=\int_0^x
f(s)\,\mathrm{d}s$. We obtain
\[
  \Phi_E(x,y)=G_E^{-1}\left(G_E(x)+G_E(y)\right)\,.
\]
To determine $G_E^{-1}(x)$, it is sufficient to express $G_E(x)$ as a
formal power series and then to compute its compositional
inverse. Thus, expanding the integrand $f(s)=(1-s^4)^{-1/2}$ and
performing a formal integration term by term, we obtain the formal
series
\begin{align*}
  G_E(x)&=\int_0^x\sum_{k=0}^\infty\frac{(2k-1)!!}{2k!!}s^{4k}\,\mathrm{d}s=\sum_{k=0}^\infty\frac1{4k+1}\frac{(2k-1)!!}{(2k)!!}x^{4k+1}\\
        &=x+\frac{x^5}{10}+\frac{x^9}{24}+\frac{5\,x^{13}}{208}+\frac{35\,x^{17}}{2176}+\cdots.
\end{align*}
We deduce
\[
  G_{E}^{-1}(x)=x-\frac{x^5}{10}+\frac{x^9}{120}-\frac{11\,x^{13}}{15600}+\frac{211\,x^{17}}{3536000}+\cdots.
\]
The product law is immediately constructed by means of
Eq.~\eqref{Psi}. The first few terms read
\begin{multline*}
  \Psi_{E}(x,y)=xy+\frac{xy^5+x^5y}{10}+\frac{xy^9+x^9y}{24}-\frac{9\,x^5y^5}{100}\\
  -\frac{11\left(x^5y^9+x^9y^5\right)}{240}+\frac{5\left(xy^{13}+x^{13}y\right)}{208}+\cdots.
\end{multline*}
Note that $G_{\it{E}}^{-1}(1)$ satisfies $0< G_{\it{E}}^{-1}(1) <1$
since it is the solution of the equation $g(x)=0$ with
$g(x)=G_{\it{E}}(x)-1$.  This is because
$G_{\it{E}}(x)=\int_0^x f(s)\,\mathrm{d}s$ is continuous and monotonically
increasing and indeed $G_{\it{E}}(0)=0$ and $G_{\it{E}}(1)>1$ so we
have $g(0)=-1$ and $g(1)>0$.

\subsubsection{The Abel formal ring} The Abel formal group law has
been intensively investigated due to its prominence in algebraic
topology~\cite{Busato} and~\cite{Haze}. It is defined by
\[
  \Phi_{A}(x,y)=G_{A}^{-1}\left(G_{A}(x)+G_{A}(y)\right),\qquad
  G_{A}^{-1}(t)=\frac{e^{at}-e^{bt}}{a-b}, \qquad a\neq b\,,
\]
with
\[
  G_A(x)=x-\frac12(a+b)\,x^2+\frac16(2\,a^2+5\,ab+2\,b^2)\,x^3+\cdots.
\]
If $a=b$ the group exponential coincides with the classical Abel's
function
\[
  G_{A}^{-1}(t)= te^{at}.
\]
From Theorem~\ref{teo} we deduce the following form of the composition
law:
\begin{multline*}
  \Psi_{A}(x,y)=xy-\frac12(a+b)(xy^2+x^2y)+\frac16(2a^2+5ab+2b^2)(xy^3+x^3y)+\\
  +\frac14\left(a^2+2a(b+1)+b(b+2)\right)x^2y^2+\cdots \ .
\end{multline*}

\section{The formal ring of a two-dimensional multiplicative group law}\label{sec.2dim}
We shall present here an example of a two-dimensional formal
ring. First we construct a two-dimensional, bi-parametric version of
the multiplicative formal group law.  One can directly prove that the
series
\begin{align}
  \Phi_1(x_1,x_2,y_1,y_2)&:=x_1+y_1+ a x_1y_1+ b\big(x_1 y_2+x_2 y_1\big)+ a x_2y_2 \label{2dimfg1}\\
  \Phi_2(x_1,x_2,y_1,y_2)&:= x_2+y_2+ b x_1 y_1+ a \big(x_1 y_2+x_2 y_1 \big)+ b x_2y_2 \label{2dimfg2}
\end{align}
with $a$, $b \in \mathbb{R}$, define a formal group law
$\Phi(\bx,\by)=(\Phi_1(\bx,\by),\Phi_2(\bx,\by))$ which, to our knowledge, is new. The associated 2-dimensional group exponential
$G^{-1}(t_1,t_2)$ is given
explicitly by:
\[
  \begin{aligned}
    (G^{-1})_{1}(t_1,t_2)&=\frac{e^{2(a+b)(t_1+t_2)}-1}{2(a+b)}+\frac{e^{2(a-b)(t_1-t_2)}-1}{2(a-b)}\,,\\
    (G^{-1})_{2}(t_1,t_2)&=\frac{e^{2(a+b)(t_1+t_2)}-1}{2(a+b)}-\frac{e^{2(a-b)(t_1-t_2)}-1}{2(a-b)}\
    ,
  \end{aligned}
\]
with $G^{-1}(G(\bx)+G(\by))=(\Phi_1(\bx,\by),\Phi_2(\bx,\by))$. According to Theorem~\ref{theomain}, the existence of a ring structure corresponding to the two-dimensional
formal group~\eqref{2dimfg1}-\eqref{2dimfg2} is ensured by the series
$\Psi(\bx,\by)=(\Psi_1(\bx,\by),\Psi_2(\bx,\by))$,
where
\[
  \begin{aligned}
    \Psi_1(x_1,x_2,y_1,y_2)&=\psi(x_1,x_2,y_1,y_2)+\varphi(x_1,x_2,y_1,y_2)\\
    \Psi_2(x_1,x_2,y_1,y_2)&=\psi(x_1,x_2,y_1,y_2)-\varphi(x_1,x_2,y_1,y_2)
  \end{aligned}
\]
and finally,
\[
  \begin{aligned}
    \psi(x_1,x_2,y_1,y_2)&=\frac{1}{a}\exp\left(\frac{a\ln\big(1+\frac{1}{2}b(x_1-x_2)\big)\ln\big(1+\frac{1}{2}b(y_1- y_2)\big)}{2b^2}\right)\times\\
    &\times\exp\left(\frac{\ln\big(1+\frac{1}{2}a(x_1+ x_2)\big)+\ln\big(1+\frac{1}{2}a(y_1+y_2)\big)}{2a}\right)-\frac{1}{a}\\
    \varphi(x_1,x_2,y_1,y_2)&=\frac{1}{b}\exp\left(\frac{\ln\big(1+\frac{1}{2}a(x_1+x_2)\big)\ln\big(1+\frac{1}{2}b(y_1-y_2)\big)}{2a}\right)\times\\
    &\times\exp\left(\frac{\ln\big(1+\frac{1}{2}b(x_1-
        x_2)\big)+\ln\big(1+\frac{1}{2}a(y_1+y_2)\big)}{2a}\right)-\frac{1}{b}\,.
  \end{aligned}
\]


\section*{Acknowledgement}
This work has been partly supported by the research project
FIS2015-63966, MINECO, Spain, and by the ICMAT Severo Ochoa project
SEV-2015-0554 (MINECO). P.T. is member of the Gruppo Nazionale di Fisica Matematica (INDAM), Italy.


\begin{thebibliography}{00} 

\bibitem{AB} A. Baker, \textit{Combinatorial and arithmetic identities
    based on formal group laws}, Lect. Notes in Math. \textbf{1298},
  17--34, Springer, 1987.

\bibitem{Bochner} S. Bochner, \textit{Formal Lie groups},
  Ann. Math. \textbf{47} (1946), 192--201.

\bibitem{BMN} V. M. Bukhshtaber, A. S. Mishchenko and S. P. Novikov,
  \textit{Formal groups and their role in the apparatus of algebraic
    topology}, Uspehi Mat.  Nauk \textbf{26} (1971), 161--154,
  transl. Russ. Math. Surv. \textbf{26} (1971), 63--90.

\bibitem{BN} V. M. Bukhshtaber and S. P. Novikov, \textit{Formal
    groups, power systems and Adams operators}, Mathematics of the
  USSR-Sbornik, \textbf{13} (1971), 80--116 .

\bibitem{Busato} P. Busato, \textit{Realization of Abel's universal
    formal group law}, Math. Z. \textbf{239}(2002), 527-561.

\bibitem{Faltings} G. Faltings, \textit{N\'eron models and formal
    groups}, Milan J. Math., \textbf{76} (2008), 93--123.

\bibitem{Haze} M. Hazewinkel, \textit{Formal Groups and Applications},
  Academic Press, New York, 1978.

\bibitem{Hirzebruch1978} F. Hirzebruch, \textit{Topological methods in
    algebraic geometry}, Springer, Berlin, 1978.

\bibitem{Honda} T. Honda, \textit{Formal groups and zeta functions},
  Osaka J. Math. \textbf{5} (1968), 199--213.

\bibitem{Honda1970} T. Honda, \textit{On the theory of commutative
    formal groups}, J. Math. Soc. Japan \textbf{22} (1970), 213-246.

\bibitem{Lang1974} S. Lang, \textit{Algebra}, Addison-Wesley (1974).

\bibitem{Lazard} M. Lazard, \textit{Sur les groupes de Lie formels \`a
    un param\`etre}, Bull. Soc. Math. France \textbf{83} (1955),
  251--274.

\bibitem{LT} J. Lubin and J. Tate, \textit{Formal complex
    multiplication in local fields}, Ann. of Math. \textbf{81} (1965),
  380-387.

\bibitem{MT} S. Marmi and P. Tempesta, \textit{Hyperfunctions, formal
    groups and generalized Lipschitz summation formulas},
  Nonl. Anal. \textbf{75} (2012), 1768--1777.

\bibitem{Mumford1966} D. Mumford, \textit{Lectures on curves on an algebraic surface}, Princeton University Press (1966).

\bibitem{Nov} S. P.  Novikov, \textit{The methods of algebraic
    topology from the point of view of cobordism theory},
  Izv. Akad. Nauk SSSR Ser. Mat. \textbf{31} (1967), 885--951,
  transl. Math. SSR--Izv. \textbf{1} (1967), 827--913.

\bibitem{Quillen} D. Quillen, \textit{On the formal group laws of
    unoriented and complex cobordism theory},
  Bull. Amer. Math. Soc. \textbf{75} (1969), 1293--1298.

\bibitem{Serre1992} J.--P. Serre, \textit{Lie algebras and Lie
    groups}, Lecture Notes in Mathematics, 1500 Springer--Verlag,
  1992.

\bibitem{Serre1959} J.-P. Serre, \textit{Groupes algébrique et corps des classes}, Hermann (1959).

\bibitem{Serre1966} J.--P. Serre, \textit{Courbes elliptiques et
    groupes formels}, Ann. Coll\`{e}ge de France, 49--58 (1966)
  (Oeuvres, vol. II, 71, 315--324).

\bibitem {Tempesta1} P.  Tempesta, \textit{Formal groups,
    Bernoulli--type polynomials and $L$--series} (presented by
  J.--P. Serre), C. R. Math. Acad. Sci. Paris, Ser. I \textbf{345}
  (2007), 303--306 .

\bibitem{PT2010} P. Tempesta, \textit{L--series and Hurwitz zeta functions
  associated with the universal formal group}, Annali
  Sc. Norm. Superiore, Classe di Scienze, IX (2010), 1--12.

\bibitem{PT2015} P. Tempesta, \textit{The Lazard formal group,
    universal congruences and special values of zeta functions},
  Trans. Am. Math. Soc., \textbf{367} (2015), 7015-7028.

\bibitem{P2016} P. Tempesta, \textit{Formal Groups and
    $Z$--Entropies}, Proc. Royal Soc. A, Vol. 472 (2016), 20160143.

    \bibitem{Witt1936} E. Witt, \textit{Zyklische K\"orper und Algebren der characteristik $p$ vom Grad $p^n$. Struktur diskret bewerteter perfekter K\"orper mit vollkommenem Restklassen-k\"orper
    der Charakteristik $p$}, J. Reine Angew. Math.  \textbf{176}, (1936) 126–140.


\end{thebibliography}
\end{document}